\newtheorem{theorem}{Theorem}[section]
\newtheorem{lemma}[theorem]{Lemma}
\theoremstyle{definition}
\numberwithin{equation}{section}
\newcommand{\C}{\mathbb{C}}
\newcommand{\F}{\mathbb{F}}
\newcommand{\Q}{\mathbb{Q}}
\newcommand{\R}{\mathbb{R}}
\newcommand{\Z}{\mathbb{Z}}
\newcommand{\SL}{\mathrm{SL}}
\newcommand{\PGL}{\mathrm{PGL}}
\renewcommand{\Re}{\mathrm{Re}}
\renewcommand{\tilde}{\widetilde}
\renewcommand{\epsilon}{\varepsilon}
\patchcmd{\section}{\scshape}{\bfseries}{}{}
\renewcommand{\@secnumfont}{\bfseries}
\makeatletter\newcommand{\tpmod}[1]{{\@displayfalse \pmod{#1}}}
\newcommand{\MYhref}[3][black!39!blue]{\href{#2}{\color{#1}{#3}}}
\renewcommand*\backref[1]{$\uparrow$\thinspace\ifx#1\relax \else #1 \fi}
\begin{document}

\title[QUE for Eisenstein Series on Bruhat--Tits Buildings]{Quantum Unique Ergodicity for Eisenstein Series on Bruhat--Tits Buildings}

\author[Ikuya Kaneko]{Ikuya Kaneko\,\orcidlink{0000-0003-4518-1805}}
\address[Ikuya Kaneko]{The Division of Physics, Mathematics and Astronomy, California Institute of Technology, 1200 East California Boulevard, Pasadena, California 91125, United States of America}
\urladdr{\href{https://sites.google.com/view/ikuyakaneko/}{https://sites.google.com/view/ikuyakaneko/}}
\email{ikuyak@icloud.com}

\author[Shin-ya Koyama]{Shin-ya Koyama\,\orcidlink{0000-0002-8529-5244}}
\address[Shin-ya Koyama]{Department of Biomedical Engineering, Toyo University, 2100 Kujirai, Kawagoe-shi, Saitama 350-8585, Japan}
\urladdr{\href{http://www1.tmtv.ne.jp/~koyama/}{http://www1.tmtv.ne.jp/~koyama/}}
\email{koyama@tmtv.ne.jp}

\subjclass[2020]{\MYhref[black]{https://mathscinet.ams.org/mathscinet/msc/msc2020.html?t=&s=58j51}{58J51} (primary); \MYhref[black]{https://mathscinet.ams.org/mathscinet/msc/msc2020.html?t=&s=11f12}{11F12}, \MYhref[black]{https://mathscinet.ams.org/mathscinet/msc/msc2020.html?t=&s=81q50}{81Q50} (secondary)}

\keywords{quantum unique ergodicity, Eisenstein series, function fields, level aspect}

\cleanlookdateon

\date{\today}

\begin{abstract}
We prove the quantum unique ergodicity conjecture for Eisenstein series over function fields in the level aspect. Adapting the machinery of Luo--Sarnak (1995), we employ the spectral decomposition and handle the cuspidal and Eisenstein contributions separately.
\end{abstract}

\maketitle

\section{Introduction}
A famous conjecture of Berry~\cite{Berry1977} and Hejhal--Rackner~\cite{HejhalRackner1992} asserts that Hecke--Maa{\ss} newforms ought to behave like random waves in the limit of large Laplace eigenvalue.~This is formulated more rigorously by proposing that on any compact continuity set, the moments of Hecke--Maa{\ss} newforms of large Laplace eigenvalue ought to be asymptotically equal to the moments of a standard normal random variable. As for the second moment, the statement of the random wave conjecture is equivalent to the quantum unique ergodicity conjecture raised by Rudnick--Sarnak~\cite{RudnickSarnak1994}, which was resolved subsequently by Lindenstrauss~\cite{Lindenstrauss2006} and Soundararajan~\cite{Soundararajan2010} in the case of the modular surface. Prior to these breakthroughs, quantum unique ergodicity with an optimal rate of convergence was known to follow under the generalised Lindel\"{o}f hypothesis for certain triple product $L$-functions by Watson~\cite{Watson2002}.

Luo--Sarnak~\cite{LuoSarnak1995} was the first to consider quantum unique ergodicity for the continuous spectrum spanned by the Eisenstein series $E(z, s)$ on $\SL_{2}(\Z) \backslash \mathbb{H}$, where $\mathbb{H} \coloneqq \{z = x+iy \in \mathbb{C}: y > 0 \}$ stands for the upper half-plane. Let $\phi$ be a fixed continuous and compactly~supported function of level $1$. Among other results, they proved that
\begin{equation}\label{QUE}
\int_{\SL_{2}(\Z) \backslash \mathbb{H}} \phi(z) \left|E \left(z, \frac{1}{2}+it \right) \right|^{2} d\mu(z) 
\to \frac{6 \log t}{\pi} \int_{\SL_{2}(\Z) \backslash \mathbb{H}} \phi(z) d\mu(z), \qquad t \to \infty,
\end{equation}
where $d\mu(z) = y^{-2} dx dy$. We refer the interested reader to the work of Zhang~\cite{Zhang2019} and the first author~\cite{ChatzakosDarreyeKaneko2023} for generalisations of~\eqref{QUE} to higher rank groups. As is seen in~\cite{Sarnak1995}, it is beneficial to study the problem in various aspects in term of the parameters in functional equations of $L$-functions. Since then, we have studied the phenomenon in a different aspect; in particular, the authors~\cite{KanekoKoyama2020,Koyama2009} and Pan--Young~\cite{PanYoung2021} proved, with $p$ traversing primes, that
\begin{equation}\label{level}
\int_{\Gamma_{0}(p) \backslash \mathbb{H}} \phi(z) \left|E_{\infty} \left(z, \frac{1}{2}+it \right) \right|^{2} d\mu(z) 
\to \frac{6 \log p}{\pi} \int_{\SL_{2}(\Z) \backslash \mathbb{H}} \phi(z) d\mu(z), \qquad p \to \infty,
\end{equation}
where $E_{\infty}(z, s)$ is the Eisenstein series of weight zero, level $p$, and trivial central character.

On the other hand, the analogy between rational numbers and polynomials over finite~fields plays an essential role in number theory. If $q > 3$ is a prime, then the alternative~underlying group is $\PGL_{2}(\F_{q}[T])$ acting on $G = \PGL_{2}(\F_{q}((T^{-1})))$. We handle the congruence subgroup $\Gamma_{0}(A)$ with $A \in \F_{q}[T]$ a monic irreducible polynomial. The space $\Gamma_{0}(A) \backslash G$ can be realised as an arithmetic infinite graph for which the spectral theory of automorphic forms is developed by Nagoshi~\cite{Nagoshi2001}. Furthermore, the space is geometrically finite in the sense that it is a union of a finite graph with finitely many infinite rays, and the spectra of the corresponding Laplacian consist of a finite number of eigenvalues. Hence, there is no analogue of~\eqref{QUE} to be formulated in our case. This naturally leads us to the study of the level aspect version~\eqref{level} over function fields by taking a sequence of polynomials $A$ such that $\deg{A}$ tends to infinity.

We proceed to the statement of our main result. Let $K = \PGL_{2}(\F_{q}[[T^{-1}]])$ be a maximal compact subgroup of $G$, and let $\mathbb{H} = G/K$. Let $\Gamma_{\infty} \coloneqq \{\gamma \in \Gamma_{0}(A): (0, 1) \gamma = (0, 1) \}$ denote the stabiliser of $\infty$ in $\Gamma_{0}(A)$. The Eisenstein series $E(g, s)$ for $g \in \mathbb{H}$ is defined by
\begin{equation}\label{Eisenstein-series}
E(g, s) \coloneqq \sum_{\gamma \in \Gamma_{\infty} \backslash \Gamma_{0}(A)}
\left(\frac{|\det(\gamma g)|}{h((0, 1) \gamma g)^{2}} \right)^{s},
\end{equation}
where $h((x, y)) = \sup \{|x|, |y| \}$ for each vector $(x, y)$ in $k_{\infty} \times k_{\infty}$ with $k_{\infty} = \F_{q}((T^{-1}))$. It is absolutely convergent for $\Re(s) > 1$ and meromorphic in the entire complex plane $\mathbb{C}$.~Li~\cite{Li1979} and Nagoshi~\cite{Nagoshi2001} demonstrated that $E(g, \frac{1}{2}+it)$ corresponds to the continuous spectrum of the adjacency operator. A class in $\mathbb{H}$ is represented by a matrix of the shape
\begin{equation*}
g = \begin{pmatrix} T^{n} & x \\ 0 & 1 \end{pmatrix}
 = \begin{pmatrix} 1 & x \\ 0 & 1 \end{pmatrix} \begin{pmatrix} T^{n} & 0 \\ 0 & 1 \end{pmatrix},
\end{equation*}
where $n \in \Z$ and $x \in \F_{q}((T^{-1}))$. We define the measure $dg$ on $\Gamma_{0}(A) \backslash \mathbb{H}$ by
\begin{equation*}
\int \phi(g) dg = \sum_{n \in \Z} \int \phi \left(\begin{pmatrix} T^{n} & x \\ 0 & 1 \end{pmatrix} \right) dx,
\end{equation*}
where the measure $dx$ is normalised so that $\int_{\F_{q}((T^{-1}))/\F_{q}[T]} dx = 1$.
\begin{theorem}\label{main}
Keep the notation as above. Let $A \in \F_{q}[T]$ be a monic irreducible polynomial, and let $t \in \R^{\times}$. For a fixed continuous and compactly supported function $\phi$, we have that
\begin{equation*}
\int_{\Gamma_{0}(A) \backslash \mathbb{H}} \phi(g) \left|E \left(g, \frac{1}{2}+it \right) \right|^{2} dg
 = \frac{(1+q^{-1})}{2\log q} \log|A| \int_{\Gamma_{0}(1) \backslash \mathbb{H}} \phi(g) dg+O(1),
\end{equation*}
where the implicit constant is absolute and effectively computable.
\end{theorem}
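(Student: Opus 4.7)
The plan is to adapt the spectral method of Luo--Sarnak~\cite{LuoSarnak1995}, as executed in the number field level aspect in~\cite{KanekoKoyama2020,Koyama2009,PanYoung2021}, to the Bruhat--Tits setting. Since $\Gamma_{0}(A) \backslash \mathbb{H}$ is geometrically finite, Nagoshi's spectral theory~\cite{Nagoshi2001} decomposes $L^{2}(\Gamma_{0}(A) \backslash \mathbb{H})$ into the constant function, a \emph{finite} orthonormal family $\{\phi_{j}\}$ of cuspidal eigenfunctions of the adjacency operator, and a continuous Eisenstein part indexed by the cusps of $\Gamma_{0}(A)$ (which are just $\infty$ and $0$, since $A$ is prime). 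Applying this decomposition to $\phi$ and substituting it yields
\begin{equation*}
\int_{\Gamma_{0}(A) \backslash \mathbb{H}} \phi(g) \left|E \left(g, \tfrac{1}{2}+it \right) \right|^{2} dg
 = c_{0}(\phi) \cdot \mathcal{M}(A,t) + \sum_{j} \langle \phi, \phi_{j} \rangle I_{j}(t) + \mathcal{E}(\phi,t),
\end{equation*}
where $c_{0}(\phi)$ is the projection of $\phi$ onto the constants, $\mathcal{M}(A,t)$ is the regularised $L^{2}$-norm of $E(\cdot, \tfrac{1}{2}+it)$, and $\mathcal{E}(\phi,t)$ collects the triple integrals against the Eisenstein continuous spectrum.

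The main term is extracted from the first summand. A Maass--Selberg-type computation expresses $\mathcal{M}(A,t)$ as a residue at $s = 1$, together with a bounded remainder, of the meromorphic continuation of $\int_{\Gamma_{0}(A) \backslash \mathbb{H}} E(g, s) E(g, 1-s+2it) \, dg$, which by Rankin--Selberg unfolding reduces to the constant term of $E$ at the cusp $\infty$ at level $A$. That constant term factors through the zeta function of $\mathbb{P}^{1}/\F_{q}$, whose simple pole at $s=1$ supplies the prefactor $1/\log q$, together with local data at the two cusps of $\Gamma_{0}(A)$ and at the ramified place $A$ which jointly yield the constant $(1+q^{-1})/(2\log q)$ and the weight $\log|A| = \deg(A) \cdot \log q$. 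Using that $\phi$ is of level $1$, so that $\int_{\Gamma_{0}(A) \backslash \mathbb{H}} \phi \, dg = [\Gamma_{0}(1) : \Gamma_{0}(A)] \int_{\Gamma_{0}(1) \backslash \mathbb{H}} \phi \, dg$, and dividing out $\mathrm{vol}(\Gamma_{0}(A) \backslash \mathbb{H})$ in $c_{0}(\phi)$, recovers the leading term exactly as stated.

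For the cuspidal contribution, we apply Rankin--Selberg unfolding to each $I_{j}(t) = \int \phi_{j}(g) |E(g,\tfrac{1}{2}+it)|^{2} \, dg$, realising it as a ratio of automorphic $L$-functions attached to $\phi_{j}$ on (or near) the critical line, twisted by the Eisenstein parameter $it$. Because these $L$-functions over $\F_{q}(T)$ are polynomials in $q^{-s}$ satisfying the Riemann Hypothesis by Drinfeld--Deligne, the subconvexity step that drives the classical problem becomes automatic, with sharp bounds polynomial in $|A|$ and $t$. Cauchy--Schwarz combined with Bessel's inequality $\sum_{j} |\langle \phi, \phi_{j} \rangle|^{2} \leq \|\phi\|_{2}^{2}$, together with the control on the number of $\phi_{j}$ inherited from the finite-graph structure of $\Gamma_{0}(A) \backslash \mathbb{H}$, then bound the cuspidal sum by $O(1)$. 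The Eisenstein remainder $\mathcal{E}(\phi,t)$ is treated in parallel via the explicit Fourier expansion of $E_{\mathfrak{a}}(g,\tfrac{1}{2}+i\tau)$ at each cusp $\mathfrak{a}$, followed by a contour shift in $\tau$ against the Plancherel measure.

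The main obstacle we anticipate is not any single analytic bound but the precise bookkeeping in the main term: one must isolate the $\log|A|$ contribution of the ramified place from the scattering-matrix entries at both cusps and from the genuine analytic residue, so that the coefficient $(1+q^{-1})/(2\log q)$ emerges cleanly rather than being absorbed into the $O(1)$ error. Once that calculation is in hand, the cuspidal and Eisenstein estimates are substantially milder than in the classical setting, owing to the Weil--Deligne Riemann Hypothesis and the finiteness of the discrete spectrum.
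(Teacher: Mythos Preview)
Your overall strategy---spectral decomposition of $\phi$ followed by Rankin--Selberg unfolding---matches the paper's, but the organisation differs in two notable places.

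For the non-cuspidal part, you separate the constant projection from a continuous Eisenstein integral and propose to extract the main term from a regularised $L^{2}$-norm $\mathcal{M}(A,t)$ via Maass--Selberg. The paper instead keeps the non-cuspidal piece packaged as an \emph{incomplete} Eisenstein series $E(g\mid\psi)$ with $\psi\in\mathcal{C}_{0}^{\infty}(\R_{>0})$ and computes $\langle |E|^{2}, E(\ast\mid\psi)\rangle$ directly. After unfolding, inserting the explicit Fourier--Whittaker coefficients $c(n,Q,\tfrac12+it)$, and passing to the Mellin transform $H(s)$ of $\psi$, the main term comes from a \emph{double} pole at $s=0$ supplied by the function-field Ramanujan identity
\[
\sum_{Q\ \text{monic}}\frac{|\sigma_{2it}(Q)|^{2}}{|Q|^{s+1}}
=\frac{\zeta_{\F_{q}[T]}(s+1)^{2}\,\zeta_{\F_{q}[T]}(s+1+2it)\,\zeta_{\F_{q}[T]}(s+1-2it)}{\zeta_{\F_{q}[T]}(2s+2)}.
\]
The factor $\log|A|$ arises from the logarithmic derivative of $(|A|/q^{2})^{s+1}$ at $s=0$, and the index $[\Gamma_{0}(1):\Gamma_{0}(A)]=|A|+1$ converts $H(0)$ into the level-$1$ integral, producing the constant $(1+q^{-1})/(2\log q)$. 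This sidesteps the regularisation that your $\mathcal{M}(A,t)$ would require; the ``bookkeeping obstacle'' you anticipate is precisely what the incomplete-Eisenstein packaging is designed to dissolve.

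For the cuspidal term, the paper does not invoke $L$-function bounds, the Riemann Hypothesis over $\F_{q}(T)$, Cauchy--Schwarz, or any count of cusp forms. It unfolds $\int u\,|E|^{2}$, inserts the Fourier--Whittaker expansions of both $u$ and $E$, sums the geometric series in $n$ explicitly, and observes that an overall prefactor $q^{1-a}\asymp |A|^{-1}$ survives while the remaining $Q$-sum is bounded by an elementary Dirichlet-series identity relating $\sum_{Q}\tilde c(Q)\sigma_{\nu}(Q)|Q|^{-s}$ to $L(s,u)L(s-\nu,u)/\zeta_{\F_{q}[T]}(2s-\nu)$. This yields $I_{j}(t)\ll|A|^{-1}$ for each cusp form individually. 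Your route via Bessel and spectral multiplicity would also close, but is less sharp and introduces a dependence on the growth of the discrete spectrum that the paper never needs.
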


\subsection*{Acknowledgements}
We extend our gratitude to Nobushige Kurokawa, Yuichiro Taguchi, and Masao Tsuzuki for their valuable feedback. We are also grateful to the anonymous referee for their detailed comments, which significantly enhanced the presentation of the~paper.

\section{Preliminaries}\label{preliminaries}
This section sets up notation and compiles a spectral toolbox including Eisenstein series on Bruhat--Tits buildings and characters of $\F_{q}((T^{-1}))$. We also compute the index of $\Gamma_{0}(A)$ in $\PGL_{2}(\F_{q}[T])$. Throughout this section, we denote $\Gamma = \Gamma_{0}(A)$ for notational convenience.

%\subsection{Asymptotic Notation and Terminology}\label{asymptotic-notation-and-terminology}
%We recall the notation . The Vinogradov symbol $A \ll B$ and the big $\mathrm{O}$ notation $A = O(B)$ signify that $|A| \leq C|B|$ for some fixed quantity $C$. Here and henceforth, we consider a quantity to be fixed if we explicitly label it as such, or if it depends solely upon previously defined fixed quantities.~The letter $\epsilon$ represents an arbitrarily small positive quantity,~not~necessarily the same at each occurrence. An implicit constant may depend on $\epsilon$, but this will be suppressed from the~notation.

\subsection{Function fields}\label{function-fields}
Let $\F_{q}$ be the finite field, $\F_{q}[T]$ be the ring of polynomials in $T$~over $\F_{q}$, and $k = \F_{q}(T)$ be the quotient field. A function field $\F_{q}(T)$ behaves like number fields over $\Q$. Nonetheless, the infinite place in $\F_{q}(T)$ is a non-archimedean local field, while~the~infinite place of $\Q$ is archimedean. Let $G = \PGL_{2}(k_{\infty})$ and $K = \PGL_{2}(r_{\infty})$, where $k_{\infty} = \F_{q}((T^{-1}))$ is the field of Laurent series in the uniformiser $T^{-1}$ and $r_{\infty} = \F_{q}[[T^{-1}]]$ is the ring of Taylor series in $T^{-1}$. Note that $K$ is a maximal compact subgroup of $G$. The valuation at $T^{-1}$ on $k$ is $v_{\infty}(f/g) = \deg{g}-\deg f$ with $f, \, g \in \F_{q}[T]$, and the norm is given by $|a| = |a|_{\infty} = q^{-v_{\infty}(a)}$ for $a \in k$. For $a \in k_{\infty}$ with $a = \sum_{i = n}^{\infty} a_{i} T^{-i}$, $a_{n} \neq 0$, we have $|a| = q^{-n}$.

\subsection{Bruhat--Tits buildings}\label{Bruhat-Tits-buildings}
The upper half-plane in our case is the homogeneous~space $\mathbb{H} = G/K$. We can endow $\mathbb{H}$ with the structure of the $q+1$ regular tree $X$. Given a graph~$Y$, we write $V(Y)$ for the set of vertices of $Y$ and $E(Y)$ for the set of edges of $Y$. Then we have $\mathbb{H} = V(X)$. The tree $X$ has a natural distance $d$, namely $d(u, v) = 1$ if $u$ and $v$ are adjacent in $X$. This concept plays a key role in the definition of the adjacency operator on $\Gamma \backslash \mathbb{H}$.

We recall the spectral theory of automorphic forms on the Bruhat--Tits building associated to $\PGL_{2}$ of a function field over a finite field, with an emphasis on the spectral expansion and the distinction between discrete and continuous spectra. Efrat~\cite{Efrat1991} observed that the spectrum of the adjacency operator (denoted by $\mathcal{T}$) looks like
\begin{center}
\begin{tikzpicture}
\draw (0,0) -- (12,0);
\draw [line width=0.75mm] (4,0) -- (8,0);
\foreach \x in {0,4,8,12} \draw (\x, -0.15) -- (\x, 0.15);
\draw(0,-0.2) node[below]{$-(q+1)$};
\draw(4,-0.2) node[below]{$-2 \sqrt{q}$};
\draw(8,-0.2) node[below]{$2 \sqrt{q}$};
\draw(12,-0.2) node[below]{$q+1$};
\draw(0,0.2) node[above]{discrete};
\draw(6,0.2) node[above]{continuous};
\draw(12,0.2) node[above]{discrete};
\end{tikzpicture}
\end{center}
In particular, if $\Gamma = \Gamma(A)$ is the principal congruence subgroup, then Morgenstern~\cite{Morgenstern1995} proved that the eigenvalues of $\mathcal{T}$ except for $\lambda = \pm (q+1)$ satisfy $|\lambda| \leq 2\sqrt{q}$ via the function field analogue of the Ramanujan--Petersson conjecture due to Drinfeld~\cite{Drinfeld1988}. Hence,~$\Gamma \backslash \mathbb{H}$~is a Ramanujan diagram. The interested reader is directed to the work of Morgenstern~\cite{Morgenstern1994} and references therein for Ramanujan diagrams and some relevant information.

\subsection{Eisenstein series}
We follow the notation of Nagoshi~\cite{Nagoshi2001} with minor adjustments. Let $\{\mathfrak{a}, \mathfrak{b}, \mathfrak{c}, \dots \}$ be the set of inequivalent cusps for $\Gamma$. Let $\Gamma_{\mathfrak{a}}$ be the stabiliser of the cusp $\mathfrak{a}$ in $\Gamma$, and let $\sigma_{\mathfrak{a}} \in G$ denote the scaling matrix such that $\sigma_{\mathfrak{a}} \infty = \mathfrak{a}$ and $\sigma_{\mathfrak{a}}^{-1} \Gamma_{\mathfrak{a}} \sigma_{\mathfrak{a}} = \Gamma_{\infty}$. Given $g \in \mathbb{H}$ and $s \in \C$, the Eisenstein series attached to the cusp $\mathfrak{a}$ is defined by
\begin{equation*}
E_{\mathfrak{a}}(g, s) \coloneqq \sum_{\gamma \in \Gamma_{\mathfrak{a}} \backslash \Gamma} 
\psi_{s}(\sigma_{\mathfrak{a}}^{-1} \gamma g),
\end{equation*}
where
\begin{equation*}
\psi_{s}(g) \coloneqq \left(\frac{|\det(g)|}{h((0, 1) g)^{2}} \right)^{s}.
\end{equation*}
It satisfies the relation
\begin{equation*}
(\mathcal{T} E_{\mathfrak{a}})(g, s) = (q^{s}+q^{1-s}) E_{\mathfrak{a}}(g, s).
\end{equation*}
Furthermore, it is invariant under $\Gamma$ and has a Fourier--Whittaker expansion at each cusp.~In the present paper, we focus on the cusp at infinity. For notational convenience, we suppress its dependence in the subscript and write $E(g, s) \coloneqq E_{\infty}(g, s)$.

\subsection{Incomplete Eisenstein series}\label{incomplete-Eisenstein-series}
Let
\begin{equation*}
\Gamma = \Gamma_{0}(A) \coloneqq \left\{g = \begin{pmatrix} a & b \\ c & d \end{pmatrix} \in \PGL_{2}(\F_{q}[T]): c \equiv 0 \tpmod{A} \right\}.
\end{equation*}
An incomplete Eisenstein series $E(g|\psi)$ is defined by
\begin{equation*}
E(g|\psi) \coloneqq \sum_{\gamma \in \Gamma_{\infty} \backslash \Gamma}
\psi \left(\frac{|\det(\gamma g)|}{h((0, 1) \gamma g)^{2}} \right),
\end{equation*}
where $\psi$ is a compactly supported test function on $\R_{> 0}$, namely $\psi \in \mathcal{C}_{0}^{\infty}(\R_{> 0})$. Let $\mathcal{B}(\Gamma \backslash G)$~be the space of smooth and bounded automorphic functions $f$ satisfying $f(\gamma g) = f(g)$ for any $\gamma \in \Gamma$ and $g \in G$. If $\mathcal{E}(\Gamma \backslash G)$ denotes the space spanned by incomplete Eisenstein series, then we are interested in its orthogonal complement in $\mathcal{B}(\Gamma \backslash G)$ with respect to the Petersson inner product, obtaining the following lemma.
\begin{lemma}\label{lem:orthogonal}
The orthogonal complement to $\mathcal{E}(\Gamma \backslash G)$ in $\mathcal B(\Gamma \backslash G)$ is the space of cusp forms.
\end{lemma}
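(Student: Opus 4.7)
The plan is to adapt the classical unfolding argument familiar from the archimedean setting (as in Iwaniec, \textit{Spectral Methods of Automorphic Forms}, Chapter~7) to the Bruhat--Tits tree. Throughout, I interpret $\mathcal{E}(\Gamma \backslash G)$ as the span of incomplete Eisenstein series attached to \emph{every} cusp $\mathfrak{a}$, obtained from the definition at $\infty$ by conjugation by the scaling matrix $\sigma_{\mathfrak{a}}$; the lemma is then the standard identification of $\mathcal{E}^{\perp}$ with the cuspidal subspace.

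For $f \in \mathcal{B}(\Gamma \backslash G)$ and $\psi \in \mathcal{C}_{0}^{\infty}(\R_{> 0})$, the unfolding trick yields
\begin{equation*}
\langle f, E(\cdot|\psi) \rangle
 = \int_{\Gamma_{\infty} \backslash \mathbb{H}} f(g) \overline{\psi \left(\frac{|\det g|}{h((0, 1) g)^{2}} \right)} dg
\end{equation*}
after pulling the sum defining $E(g|\psi)$ out of the integral against the $\Gamma$-invariant $f$. Parameterising $\Gamma_{\infty} \backslash \mathbb{H}$ by $g = \begin{pmatrix} T^{n} & x \\ 0 & 1 \end{pmatrix}$ with $n \in \Z$ and $x \in \F_{q}((T^{-1}))/\F_{q}[T]$, the ratio inside $\psi$ collapses to $q^{n}$ (since $\det g = T^{n}$ and $(0, 1) g = (0, 1)$), so that
\begin{equation*}
\langle f, E(\cdot|\psi) \rangle
 = \sum_{n \in \Z} \overline{\psi(q^{n})} \int_{\F_{q}((T^{-1}))/\F_{q}[T]} f \begin{pmatrix} T^{n} & x \\ 0 & 1 \end{pmatrix} dx,
\end{equation*}
and the inner integral is by construction the zeroth Fourier coefficient of $f$ at the cusp $\infty$ at height $n$. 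Since bump functions on $\R_{> 0}$ supported near a prescribed $q^{n_{0}}$ can separate points of the discrete set $\{q^{n}: n \in \Z \}$, the vanishing of $\langle f, E(\cdot|\psi) \rangle$ for every $\psi$ forces each such zeroth coefficient to be zero. Running the identical argument with $E_{\mathfrak{a}}(\cdot|\psi)$ in place of $E(\cdot|\psi)$, converted into the form above via $\sigma_{\mathfrak{a}}$, yields vanishing at every cusp, which is exactly the cuspidality condition. The reverse inclusion is immediate from the same identity: if $f$ is cuspidal then every inner integral is zero and thus $f \perp \mathcal{E}(\Gamma \backslash G)$.

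The only delicate step is justifying the unfolding. Because $\psi$ has compact support in $\R_{> 0}$ and $\Gamma$ acts discontinuously, only finitely many cosets $\Gamma_{\infty} \gamma$ contribute a nonzero value of $\psi$ on any bounded region of $\mathbb{H}$; combined with the fact that $f \in \mathcal{B}(\Gamma \backslash G)$ is bounded, this provides the absolute convergence required for Fubini. The remaining manipulations with the height function $h$ on $k_{\infty} \times k_{\infty}$ are formal.
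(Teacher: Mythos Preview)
Your proof is correct and follows essentially the same route as the paper: unfold the inner product against the incomplete Eisenstein series to land on $\Gamma_{\infty}\backslash\mathbb{H}$, recognise the remaining $x$-integral as the constant term $c(n,0)$ of $f$, and use the freedom in $\psi$ to kill each $c(n,0)$, then repeat at the other cusps. The paper makes the identification of the constant term explicit by inserting the Fourier--Whittaker expansion~\eqref{Fourier-development-of-f} and the orthogonality relation~\eqref{orthogonality-of-chi}, whereas you simply name the $x$-integral as the zeroth coefficient; your extra remarks on the reverse inclusion and on the absolute convergence justifying unfolding are welcome additions that the paper leaves implicit.
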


\begin{proof}
Suppose $f \in \mathcal{E}(\Gamma \backslash G)^{\perp}$. By unfolding, the inner product $\langle f, E(\ast|\psi) \rangle$ is equal to
\begin{equation*}
\int_{\Gamma \backslash \mathbb{H}} f(g) \sum_{\gamma \in \Gamma_{\infty} \backslash \Gamma}
\overline{\psi \left(\frac{|\det(\gamma g)|}{h((0, 1) \gamma g)^{2}} \right)} dg
 = \int_{\Gamma_{\infty} \backslash \mathbb{H}} f(g) \overline{\psi(|\det(g)|)} dg,
\end{equation*}
where $g = \begin{psmallmatrix} T^{n} & x \\ 0 & 1 \end{psmallmatrix} = \begin{psmallmatrix} 1 & x \\ 0 & 1 \end{psmallmatrix} \begin{psmallmatrix} T^{n} & 0 \\ 0& 1 \end{psmallmatrix}$ so that $h((0, 1) g) = 1$. Furthermore, the Fourier--Whittaker expansion of $f$ due to Li~\cite{Li1978} has the form
\begin{equation}\label{Fourier-development-of-f}
f(g) = f \left(\begin{pmatrix} T^{n} & x \\ 0 & 1 \end{pmatrix} \right) = \sum_{Q \in \F_{q}[T]} c(n, Q) \chi_{Q}(x),
\end{equation}
where $\chi_{Q}(x) = \chi(Qx)$ is a character of $\F_{q}((T^{-1}))$ trivial on $A$. Hence, we deduce
\begin{equation*}
\langle f, E(\ast|\psi) \rangle
 = \sum_{n \in \Z} \sum_{Q \in \F_{q}[T]} c(n, Q) \overline{\psi(q^{n})} \int_{\F_{q}((T^{-1}))/\F_{q}[T]} \chi_{Q}(x) dx
 = \sum_{n \in \Z} c(n, 0) \overline{\psi(q^{n})}
\end{equation*}
thanks to the orthogonality relation
\begin{equation}\label{orthogonality-of-chi}
\int_{\F_{q}((T^{-1}))/\F_{q}[T]} \chi_{Q}(x) dx = 
	\begin{cases}
	1 & \text{if $Q = 0$},\\
	0 & \text{if $Q \ne 0$}.
	\end{cases}
\end{equation}
Since $\langle f, E(\ast|\psi) \rangle = 0$ for any $\psi$, we have $c(n, 0) = 0$ for all $n \in \Z$. The treatment of other cusps is quite similar. The proof of Lemma~\ref{lem:orthogonal} is complete.
\end{proof}

\subsection{Index}
We compute the index of a Hecke congruence subgroup $\Gamma_{0}(A)$ in $\PGL_{2}(\F_{q}[T])$.
\begin{lemma}\label{index}
The index $m$ of $\Gamma_{0}(A)$ in $\PGL_{2}(\F_{q}[T])$ is given by $m = |A|+1$.
\end{lemma}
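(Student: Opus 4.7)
The plan is to mimic the classical computation $[\SL_{2}(\Z):\Gamma_{0}(p)] = p+1$ by exhibiting a transitive action of $\PGL_{2}(\F_{q}[T])$ on a set of size $|A|+1$ whose point stabiliser is $\Gamma_{0}(A)$.

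Since $A$ is monic irreducible, the quotient $R \coloneqq \F_{q}[T]/(A)$ is a finite field with $|A| = q^{\deg A}$ elements, so $|\mathbb{P}^{1}(R)| = |A|+1$. Reduction modulo $A$ gives a well-defined action of $\PGL_{2}(\F_{q}[T])$ on $\mathbb{P}^{1}(R)$ by fractional linear transformations. By inspecting the definition of $\Gamma_{0}(A)$, a matrix $\begin{psmallmatrix} a & b \\ c & d \end{psmallmatrix}$ fixes the point $[0:1] \in \mathbb{P}^{1}(R)$ if and only if $c \equiv 0 \tpmod{A}$. Hence the stabiliser of $[0:1]$ is precisely $\Gamma_{0}(A)$, and the orbit-stabiliser theorem will yield the lemma once transitivity is established.

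For transitivity, fix $[a : c] \in \mathbb{P}^{1}(R)$ and lift $a, c$ arbitrarily to $\F_{q}[T]$. Because $R$ is a field, one of $a, c$ must be a unit modulo $A$; without loss of generality $\gcd(a, A) = 1$. Adjusting $c$ by adding a suitable multiple of $A$, we may arrange that $\gcd(a, c) = 1$ in $\F_{q}[T]$ while preserving the class of $c$ modulo $A$ (a standard Bezout manoeuvre using irreducibility of $A$). Now $a, c$ are coprime in the principal ideal domain $\F_{q}[T]$, so there exist $b, d \in \F_{q}[T]$ with $ad - bc = 1$. The resulting matrix lies in $\SL_{2}(\F_{q}[T]) \subset \PGL_{2}(\F_{q}[T])$ and sends $[0:1]$ to $[a:c]$ modulo $A$.

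The only subtle point is the coprime lifting in the paragraph above; once that is in hand, transitivity follows and the index equals $|\mathbb{P}^{1}(R)| = |A|+1$.
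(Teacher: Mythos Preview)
Your approach is correct and is genuinely different from the paper's. The paper computes the index by passing through the principal congruence subgroup: it asserts that the reduction map $\pi\colon \PGL_2(\F_q[T]) \twoheadrightarrow \PGL_2(\F_q[T]/(A))$ is surjective, counts $|\PGL_2(\F_q[T]/(A))| = |A|(|A|^{2}-1)$ to obtain $[\PGL_2(\F_q[T]):\Gamma(A)]$, separately counts $[\Gamma_0(A):\Gamma(A)] = |A|(|A|-1)$ as the order of the image of $\Gamma_0(A)$ under $\pi$, and divides. Your orbit--stabiliser argument on $\mathbb{P}^{1}(\F_q[T]/(A))$ bypasses both group-order counts and goes straight to $|A|+1$; the coprime-lifting step is exactly where you make explicit the transitivity that the paper's unproved surjectivity assertion hides. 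Your route is cleaner for this particular index, while the paper's route yields $[\PGL_2(\F_q[T]):\Gamma(A)]$ as a byproduct.

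One bookkeeping slip to repair: with the standard left action on column vectors, the matrix $\begin{psmallmatrix} a & b \\ c & d \end{psmallmatrix}$ sends $[0:1]$ to $[b:d]$, so the stabiliser of $[0:1]$ is the subgroup with $b \equiv 0 \pmod A$, not $\Gamma_0(A)$. The point you want is $[1:0]$: then $[1:0] \mapsto [a:c]$, the stabiliser condition becomes $c \equiv 0 \pmod A$, and your transitivity paragraph (building a matrix with first column $\begin{psmallmatrix} a \\ c \end{psmallmatrix}$) is consistent with it. Simply replace $[0:1]$ by $[1:0]$ throughout.
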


\begin{proof}
Let $\pi$ be the natural surjection from $\PGL_{2}(\F_{q}[T])$ onto $\PGL_{2}(\F_{q}[T]/(A))$. We choose representatives so that $\PGL_{2}(\F_{q}[T])$ consists of the matrices of the form $\begin{psmallmatrix} a & b \\ c & 1 \end{psmallmatrix}$ or $\begin{psmallmatrix} a & b \\ 1 & 0 \end{psmallmatrix}$. Let $\pi \left(\begin{psmallmatrix} a & b \\ c & d \end{psmallmatrix} \right) = \begin{psmallmatrix} [a] & [b] \\ [c] & [d] \end{psmallmatrix}$. We first consider the order of $\PGL_{2}(\F_{q}[T]/(A))$, where~the order of the quotient ring $\F_{q}[T]/(A)$ is $|A|$. The number of possibilities of the first row $([a], [b])$ equals $|A|^{2}-1$, because all pairs except $(0, 0)$ are counted. Then for each $([a], [b])$, the number of possibilities of the second row $([c], [d])$ is equal to $|A|$, because $\begin{psmallmatrix} a & b \\ c & 1 \end{psmallmatrix} \ (c \in \F_{q} \setminus \{a/b \})$~and $\begin{psmallmatrix} a & b \\ 1 & 0 \end{psmallmatrix}$ for $[b] \ne 0$, and $\begin{psmallmatrix} a & b \\ c & 1 \end{psmallmatrix} \ (c \in \F_{q})$ for $b = 0$. It now follows that
\begin{equation*}
\#\PGL_{2}(\F_{q}[T]/(A)) = |A|(|A|^{2}-1).
\end{equation*}
This implies that the index of the principal congruence subgroup $\Gamma(A)$ in $\PGL_{2}(\F_{q}[T])$ is
\begin{equation*}
\#(\PGL_{2}(\F_{q}[T])/\operatorname{Ker} \pi) = \#\operatorname{Im} \pi = |A|(|A|^{2}-1).
\end{equation*}
On the other hand, the index of $\Gamma(A)$ in $\Gamma_{0}(A)$ is $|A|(|A|-1)$ since $[a]$ can be~any nonzero element in $\F_{q}$, while $[b]$ can be any element in $\F_{q}$. Hence, the desired index boils down to
\begin{equation*}
m = [\PGL_{2}(\F_{q}[T]): \Gamma_{0}(A)] = \frac{[\PGL_{2}(\F_{q}[T]): \Gamma(A)]}{[\Gamma_{0}(A): \Gamma(A)]}
 = \frac{|A|(|A|^{2}-1)}{|A|(|A|-1)} = |A|+1.
\end{equation*}
The proof of Lemma~\ref{index} is complete.
\end{proof}

\section{Proof of Theorem~\ref{main}}\label{proof-of-theorem-1.4}
This section contains the proof of Theorem~\ref{main}. Since the spectral decomposition shows that $\phi \in L^{2}(\Gamma_{0}(A) \backslash \mathbb{H})$ may be decomposed in terms of Hecke--Maa{\ss} cusp forms $\{u_{j} \}$ and incomplete Eisenstein series $E(g|\psi)$, it suffices to examine their contributions separately.

\subsection{Cuspidal contribution}\label{contribution-of-the-cuspidal-spectrum}
We prove that the cuspidal contribution is negligibly small as $|A|$ tends to infinity.
\begin{theorem}\label{3.1}
Let $u \in L^{2}(\Gamma_{0}(A) \backslash \mathbb{H})$ be a Hecke--Maa{\ss} cusp form. Then
\begin{equation*}
\int_{\Gamma_{0}(A) \backslash \mathbb{H}} u(g) \left|E \left(g, \frac{1}{2}+it \right) \right|^{2} dg \ll |A|^{-1}.
\end{equation*}
\end{theorem}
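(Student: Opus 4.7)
The plan is to unfold one copy of $E(g, 1/2+it)$ against $u$ using its definition as a sum over $\Gamma_{\infty} \backslash \Gamma$, thereby converting the triple integral into a Rankin--Selberg type expression, and then to bound the resulting $L$-value combined with the normalisation of the leading Fourier coefficient of $u$.

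First, using the $\Gamma$-invariance of $u$ and the relation $\overline{E(g, 1/2+it)} = E(g, 1/2-it)$, I would unfold one copy of $E$ via~\eqref{Eisenstein-series} to obtain
\begin{equation*}
\int_{\Gamma \backslash \mathbb{H}} u(g) \left|E\left(g, \tfrac{1}{2}+it\right)\right|^{2} dg
= \int_{\Gamma_{\infty} \backslash \mathbb{H}} u(g) \psi_{1/2+it}(g) E\left(g, \tfrac{1}{2}-it\right) dg,
\end{equation*}
parametrising the fundamental domain of $\Gamma_{\infty} \backslash \mathbb{H}$ by matrices $g = \begin{psmallmatrix} T^{n} & x \\ 0 & 1 \end{psmallmatrix}$ exactly as in the proof of Lemma~\ref{lem:orthogonal}.

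Second, I would substitute the Fourier--Whittaker expansion~\eqref{Fourier-development-of-f} of $u$ together with the analogous expansion of $E(g, 1/2-it)$, whose nonzero Fourier coefficients are, in standard normalisation, divisor-type expressions in $\sigma_{-2it}$ divided by a function field zeta value. Applying the orthogonality~\eqref{orthogonality-of-chi} collapses the $x$-integral by matching Fourier coefficients, and the Mellin-style sum in $n$ together with the Hecke relations for $u$ assembles into a Rankin--Selberg convolution, producing an identity of the schematic form
\begin{equation*}
\int_{\Gamma \backslash \mathbb{H}} u(g) \left|E\left(g, \tfrac{1}{2}+it\right)\right|^{2} dg
= c(t) \cdot \rho_{u}(1) \cdot \frac{L\left(\tfrac{1}{2}+it, u\right) L\left(\tfrac{1}{2}-it, u\right)}{|\zeta_{k}(1+2it)|^{2}} \cdot \kappa_{A}(t),
\end{equation*}
where $c(t)$ is uniformly bounded in $A$ and $\kappa_{A}(t)$ collects the local factor at the ramified place $A$.

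Third, I would estimate each factor in the level aspect. By the standard relation between $|\rho_{u}(1)|^{2}$, the Petersson norm, and the symmetric square $L$-value at $s = 1$, combined with the volume bound $\asymp |A|$ from Lemma~\ref{index}, one has $|\rho_{u}(1)| \ll |A|^{-1/2+o(1)}$. For the central $L$-values I would invoke the polynomial form of $L(s, u)$ together with Drinfeld's~\cite{Drinfeld1988} proof of the function field Ramanujan--Petersson conjecture to control $|L(\tfrac{1}{2} \pm it, u)|$ on the critical line; since $|\zeta_{k}(1+2it)|$ is bounded below for $t \in \R^{\times}$ and $\kappa_{A}(t)$ contributes an additional factor $|A|^{-1/2}$ from the ramified Whittaker normalisation, the product yields the claimed bound $\ll |A|^{-1}$. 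The hard part will be the Rankin--Selberg step: in particular, bookkeeping the local contributions at the ramified place $A$ so that the correct power of $|A|$ is extracted, and verifying that the available function field $L$-function estimates suffice for the full saving. Outside $A$, the computation mirrors the classical Luo--Sarnak unfolding.
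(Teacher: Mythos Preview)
Your first two steps match the paper: unfold one copy of $E$, insert the Fourier--Whittaker expansions of $u$ and $E$, and collapse the $x$-integral by orthogonality. The divergence is in step three, where you import the archimedean Luo--Sarnak template too literally.

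In the paper the entire $|A|^{-1}$ saving is read off from the Eisenstein Fourier coefficient itself: for $Q\ne 0$, the coefficient $c(n,Q,s)$ in~\eqref{cnQs} carries the explicit factor $q^{1-a}=q|A|^{-1}$. After extracting it, the $n$-sum (truncated by the support of $W_u$ to $n\le -2-\deg Q$) contributes a further $|Q|^{-1}$, so the remaining Dirichlet series in $Q$ lives at abscissa $\tfrac32$; by Lemma~\ref{lemm:NewOld} it equals $c(1)$ times $L$-values at $\Re(s)=\tfrac32$, bounded by absolute convergence. No central $L$-value appears, and Drinfeld's Ramanujan bound is never invoked.

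Your schematic identity with $L(\tfrac12\pm it,u)$ in the numerator, together with the splitting $|A|^{-1/2}$ from $\rho_u(1)$ plus $|A|^{-1/2}$ from a ramified Whittaker factor $\kappa_A$, does not reflect what the unfolding actually produces here. The paper's measure $dg=\sum_n\int dx$ (with no $y^{-2}$ weight) shifts the resulting Dirichlet series off the critical line, so the classical Rankin--Selberg formula with $L(\tfrac12\pm it,u)/|\zeta(1+2it)|^{2}$ does not transfer verbatim; and the local factor at $A$ coming from~\eqref{cnQs} is of size $|A|^{-1}$, not $|A|^{-1/2}$. The ``hard part'' you anticipate---controlling $L(\tfrac12\pm it,u)$ via Drinfeld and extracting a ramified $|A|^{-1/2}$---therefore does not arise: once $q^{1-a}$ is isolated from~\eqref{cnQs}, the remainder is an absolutely convergent computation handled by Lemma~\ref{lemm:NewOld}. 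If $u$ is $L^{2}$-normalised, the bound $\rho_u(1)\ll|A|^{-1/2}$ would only give \emph{additional} saving beyond what is claimed; it is not the mechanism driving Theorem~\ref{3.1}.
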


Given monic irreducible polynomials $P \in \mathbb{F}_{q}[T]$ and a monic polynomial $X \in \mathbb{F}_{q}[T]$, it is convenient to define the M\"{o}bius function by
\begin{equation*}
\mu(X) \coloneqq 
	\begin{cases}
	0 & \text{if $\#\{P: P^{2} \mid X \} > 0$},\\
	1 & \text{if $\#\{P: P \mid X \} \equiv 0 \tpmod{2}$},\\
	-1 & \text{if $\#\{P: P \mid X \} \equiv 1 \tpmod{2}$},
	\end{cases}
\end{equation*}
and Euler's totient function for nonzero $X \in \mathbb{F}_{q}[T]$ by
\begin{equation*}
\varphi(X) \coloneqq \#(\mathbb{F}_{q}[T]/(X))^{\times}.
\end{equation*}
We first calculate the Ramanujan sum over function fields.
\begin{lemma}\label{3.2}
Keep the notation as above. Let $X, Y \in \mathbb{F}_{q}[T]$ be monic, and let $(X, Y)$ be the monic polynomial of highest order dividing both $X$ and $Y$. Then
\begin{equation*}
\sum_{\substack{Y \equiv 1 \tpmod A \\ Y \tpmod{AX} \\ (X, Y) = 1}} \chi_{Q} \left(\frac{Y}{X} \right)
 = \mu \left(\frac{X}{(X, Q)} \right) \varphi \left(\frac{X}{(X, Q)} \right)^{-1} \varphi(X).
\end{equation*}
\end{lemma}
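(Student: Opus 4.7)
The plan is to reduce the restricted sum to a classical Ramanujan sum over $\F_{q}[T]/(X)$ and then evaluate that sum by Möbius inversion combined with character orthogonality.

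First I would eliminate the congruence $Y \equiv 1 \tpmod{A}$ via the Chinese remainder theorem. Since $A$ is monic irreducible and, in the range of summation at issue, coprime to $X$, the reduction
\[
\{Y \in \F_{q}[T]/(AX) : Y \equiv 1 \tpmod{A}\} \longrightarrow \F_{q}[T]/(X), \qquad Y \mapsto Y \bmod X,
\]
is a bijection. The coprimality condition $(X, Y) = 1$ depends only on $Y \bmod X$, and the summand $\chi_{Q}(Y/X) = \chi(QY/X)$ descends to $\F_{q}[T]/(X)$ because $\chi$ is trivial on $\F_{q}[T]$: shifting $Y$ by $kX$ with $k \in \F_{q}[T]$ alters $QY/X$ by $Qk \in \F_{q}[T]$. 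Consequently the sum in question equals the Ramanujan-type sum
\[
c_{X}(Q) \coloneqq \sum_{\substack{Y \bmod X \\ (X, Y) = 1}} \chi\!\left(\frac{QY}{X}\right).
\]

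Next I would evaluate $c_{X}(Q)$ by the standard Möbius-inversion manoeuvre. Writing $\mathbf{1}_{(X, Y) = 1} = \sum_{D \mid (X, Y)} \mu(D)$, exchanging summations, setting $Y = DY'$ with $Y' \bmod X/D$, and invoking orthogonality of the additive character on the finite group $\F_{q}[T]/(X/D)$ (a direct consequence of~\eqref{orthogonality-of-chi}), the inner sum becomes $|X/D|$ when $(X/D) \mid Q$ and vanishes otherwise. Relabelling $D' = X/D$ yields the convolution form
\[
c_{X}(Q) = \sum_{D' \mid (X, Q)} \mu(X/D') |D'|.
\]

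Finally I would match this with the claimed closed form by noting that both sides are multiplicative in $X$ with $Q$ held fixed, so the task reduces to comparing them at prime powers $X = P^{k}$; a short case analysis on $\min(k, v_{P}(Q))$ identifies the convolution sum with $\mu(P^{k}/(P^{k}, Q))\varphi(P^{k})/\varphi(P^{k}/(P^{k}, Q))$, and the non-squarefree case $v_{P}(X/(X, Q)) \geq 2$ makes both sides vanish through the $\mu$ factor. The only conceptual step is the Chinese remainder reduction — the remainder is the function-field analogue of the classical evaluation of the Ramanujan sum, for which I expect no genuine obstacles.
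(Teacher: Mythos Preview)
Your route and the paper's rest on the same two tools---M\"obius inversion and additive-character orthogonality---so the underlying strategy coincides. The paper first evaluates the unrestricted sum $g(X)$ (the same sum with the condition $(X,Y)=1$ dropped) and then M\"obius-inverts the convolution $g(X)=\sum_{d\mid X}C_d(Q)$; you instead strip off the congruence $Y\equiv 1\pmod A$ at the outset via the Chinese remainder theorem and land directly on the classical Ramanujan sum over $\F_q[T]/(X)$. Your path is the more direct of the two.

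The one genuine gap is the coprimality hypothesis in your first step. The reduction $\{Y\bmod AX: Y\equiv 1\pmod A\}\to\F_q[T]/(X)$ is a bijection precisely when $(A,X)=1$; when $A\mid X$ it is $|A|$-to-one onto the residues congruent to $1$ modulo $A$. Your assertion that $A$ is coprime to $X$ ``in the range of summation at issue'' is incorrect: the Fourier-coefficient formula and Lemma~\ref{general} immediately afterward invoke the identity with $A\mid X$. In fact the identity as stated fails there---for $X=A$ every admissible $Y=1+jA$ (with $j$ running modulo $A$) contributes the constant value $\chi(Q/A)$, so the left side equals $|A|\,\chi(Q/A)$, whereas the right side is $\varphi(A)$ or $-1$ according as $A\mid Q$ or not. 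The paper's own argument shares this blind spot: its evaluation of $\sum_{Y\bmod X}\chi_Q(AY/X)$ as $|X|$ or $0$ according as $X\mid Q$ or not is only valid for $(A,X)=1$, and its M\"obius inversion tacitly treats the intermediate sums $C_{X_1}(Q)$ as independent of the auxiliary congruence class modulo $A$, which again needs $(A,X_1)=1$. So the defect lies in the lemma's stated generality rather than in your method specifically, but as written your CRT step does not establish the lemma in the range where it is subsequently used.
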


\begin{proof}
If we define
\begin{equation*}
g(X) \coloneqq \sum_{\substack{Y \equiv 1 \tpmod A \\ Y \tpmod{AX}}} \chi_{Q} \left(\frac{Y}{X} \right),
\end{equation*}
then
\begin{equation*}
\chi_{Q} \left(-\frac{1}{X} \right) g(X) %= \sum_{\substack{Y \equiv 1 \tpmod A \\ Y \tpmod{AX}}} \chi_{Q} \left(\frac{Y-1}{X} \right)
 = \sum_{\substack{Y \equiv 0 \tpmod A \\ Y \tpmod{AX}}} \chi_{Q} \left(\frac{Y}{X} \right)
 = \sum_{Y \tpmod X} \chi_{Q} \left(\frac{AY}{X} \right)
 = 
	\begin{cases}
	|X| & \text{if $X \mid Q$},\\
	0 & \text{otherwise}.
	\end{cases}
\end{equation*}
On the other hand, we make the changes of variables $X = dX_{1}$ and $Y+1 = dY_{1}$, deducing
\begin{equation}\label{second-way}
g(X) = \sum_{d \mid X} \sum_{\substack{Y \equiv 0 \tpmod A \\ Y \tpmod{AX} \\ (X, Y+1) = d}} \chi_{Q} \left(\frac{Y+1}{X} \right)
 = \sum_{d \mid X} \sum_{\substack{Y_{1} \equiv X_{1} X^{-1} \tpmod A \\ Y_{1} \tpmod{AX_{1}} \\ (X_{1}, Y_{1}) = 1}} 
\chi_{Q} \left(\frac{Y_{1}}{X_{1}} \right).
\end{equation}
If the inner sum on the right-hand side of~\eqref{second-way} is denoted by $C_{X_{1}}(Q)$, then
\begin{equation*}
g(X) = \sum_{d \mid X} C_{\frac{X}{d}}(Q) = \sum_{d \mid X} C_{d}(Q).
\end{equation*}
By M\"{o}bius inversion, we conclude that
\begin{equation}\label{Mobius-inversion}
C_{X}(Q) = \sum_{d \mid X} \mu \left(\frac{X}{d} \right) \chi_{Q} \left(-\frac{1}{d} \right) g(d)
 = \mu \left(\frac{X}{(X, Q)} \right) \varphi \left(\frac{X}{(X, Q)} \right)^{-1} \varphi(X).
\end{equation}
The proof of Lemma~\ref{3.2} is complete.
\end{proof}

We then calculate a Dirichlet series that imitates~\cite[Lemma~2.2]{Koyama2009}.
\begin{lemma}\label{Dirichlet-convolution}
Keep the notation as above. Then
\begin{equation*}
\sum_{X \ne 0} \frac{C_{X}(Q)}{|X|^{s}} = \frac{\sigma_{1-s}(Q)}{\zeta_{\F_{q}[T]}(s)},
\end{equation*}
where
\begin{equation}\label{zeta}
\zeta_{\F_{q}[T]}(s) \coloneqq \sum_{X \ne 0} \frac{1}{|X|^{s}} = \frac{1}{1-q^{1-s}}.
\end{equation}
\end{lemma}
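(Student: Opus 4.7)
The plan is to rederive $C_{X}(Q)$ as a M\"{o}bius-weighted sum over the common divisors of $X$ and $Q$, insert that into the Dirichlet series, and factor the resulting double sum. The key ingredient to recycle from the proof of Lemma~\ref{3.2} is the evaluation of the auxiliary sum $g$: there we saw that $\chi_{Q}(-1/X)\, g(X)$ equals $|X|$ when $X \mid Q$ and vanishes otherwise. Since $\chi$ is trivial on $\F_{q}[T]$, whenever $X \mid Q$ we have $\chi_{Q}(1/X) = \chi(Q/X) = 1$, and thus $g(X) = |X| \cdot \mathbf{1}_{X \mid Q}$. Applying M\"{o}bius inversion to the relation $g(X) = \sum_{d \mid X} C_{d}(Q)$ established around~\eqref{second-way}--\eqref{Mobius-inversion} then produces the clean alternative form
$$C_{X}(Q) = \sum_{d \mid X} \mu(X/d)\, g(d) = \sum_{d \mid (X, Q)} \mu(X/d)\, |d|,$$
which is the function-field analogue of the classical Ramanujan-sum identity often attributed to H\"{o}lder.

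Next, I would substitute this identity into the left-hand side of the claim and exchange the order of summation. Writing $X = de$ with $d$ ranging over monic divisors of $Q$ and $e$ over arbitrary nonzero monic polynomials yields
$$\sum_{X \ne 0} \frac{C_{X}(Q)}{|X|^{s}} = \sum_{d \mid Q} |d| \sum_{e \ne 0} \frac{\mu(e)}{|de|^{s}} = \left(\sum_{d \mid Q} |d|^{1-s}\right) \left(\sum_{e \ne 0} \frac{\mu(e)}{|e|^{s}}\right).$$
The first factor is $\sigma_{1-s}(Q)$ by definition of the divisor power-sum over monic polynomials, and the second factor equals $\zeta_{\F_{q}[T]}(s)^{-1}$ by M\"{o}bius inversion of the Euler product underlying~\eqref{zeta}.

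The only technical point, and the main (minor) obstacle, is justifying the interchange of summations. The closed form in Lemma~\ref{3.2} gives the crude bound $|C_{X}(Q)| \le \varphi(X) \le |X|$, so the Dirichlet series on the left converges absolutely for $\Re(s) > 2$; this suffices to legitimise the rearrangement, after which the identity extends to its natural domain by analytic continuation via~\eqref{zeta}. I do not anticipate any deeper difficulty, since the argument is a faithful transcription to $\F_{q}[T]$ of the classical proof over $\Z$; the only care required is to keep the monic-polynomial conventions consistent with those underlying $\sigma_{1-s}$ and $\zeta_{\F_{q}[T]}$.
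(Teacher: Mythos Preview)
Your proposal is correct and follows essentially the same route as the paper, which merely asserts that the identity is ``immediate from~\eqref{Mobius-inversion} and routine calculations.'' You have simply made those routine calculations explicit, opting for the divisor-sum form $C_{X}(Q)=\sum_{d\mid (X,Q)}\mu(X/d)\,|d|$ (an equivalent restatement of~\eqref{Mobius-inversion}) rather than the closed multiplicative expression; this choice makes the factorisation of the Dirichlet series into $\sigma_{1-s}(Q)\cdot \zeta_{\F_{q}[T]}(s)^{-1}$ especially clean, but it is not a genuinely different argument.
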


\begin{proof}
The claim is immediate from~\eqref{Mobius-inversion} and routine calculations.
\end{proof}

The following lemma is useful when we calculate the Fourier--Whittaker coefficients of the Eisenstein series.
\begin{lemma}\label{general}
Keep the notation as above. Let $A^{\alpha} \parallel Q$, and let $a = \deg{A}$. Then
\begin{equation*}
\sum_{\substack{X \ne 0 \\ A \mid X}} \frac{C_{X}(Q)}{|X|^{2s}}
 = \frac{1}{\zeta_{\F_{q}[T]}(2s)} \left(\sigma_{1-2s}(Q)-\frac{\sigma_{1-2s}(QA^{-\alpha})}{1-q^{-2as}} \right).
\end{equation*}
\end{lemma}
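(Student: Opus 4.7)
The plan is to deduce Lemma~\ref{general} from Lemma~\ref{Dirichlet-convolution} by stripping out the Euler factor at the prime $A$. Formula~\eqref{Mobius-inversion}, together with the multiplicativity of $\mu$ and $\varphi$, shows that $X \mapsto C_{X}(Q)$ is multiplicative, so the Dirichlet series $\sum_{X \neq 0} C_{X}(Q)/|X|^{2s}$ factors as an Euler product $\prod_{P} f_{P}(s)$ indexed by monic irreducible polynomials $P$, and Lemma~\ref{Dirichlet-convolution} identifies the product with $\sigma_{1-2s}(Q)/\zeta_{\F_{q}[T]}(2s)$.

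First I would write
\begin{equation*}
\sum_{\substack{X \neq 0 \\ A \mid X}} \frac{C_{X}(Q)}{|X|^{2s}}
 = \sum_{X \neq 0} \frac{C_{X}(Q)}{|X|^{2s}} - \sum_{\substack{X \neq 0 \\ A \nmid X}} \frac{C_{X}(Q)}{|X|^{2s}}
 = \frac{\sigma_{1-2s}(Q)}{\zeta_{\F_{q}[T]}(2s)} \left(1-\frac{1}{f_{A}(s)} \right),
\end{equation*}
since the sum over $A \nmid X$ is precisely the Euler product with its $A$-factor removed. The task thus reduces to evaluating $f_{A}(s)$ in closed form.

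Write $A^{\alpha} \parallel Q$ and set $u = q^{-2as} = |A|^{-2s}$. Plugging $X = A^{k}$ into~\eqref{Mobius-inversion} gives $C_{A^{k}}(Q) = \varphi(A^{k})$ for $0 \leq k \leq \alpha$, $C_{A^{\alpha+1}}(Q) = -|A|^{\alpha}$, and $C_{A^{k}}(Q) = 0$ for $k \geq \alpha+2$ (because $A^{k-\alpha}$ then fails to be squarefree, so the M\"obius factor vanishes). Summing the resulting finite geometric series collapses the local factor to the clean form
\begin{equation*}
f_{A}(s) = 1 + \sum_{k=1}^{\alpha} (|A|^{k}-|A|^{k-1}) u^{k} - |A|^{\alpha} u^{\alpha+1}
 = (1-u) \sum_{j=0}^{\alpha} (|A|u)^{j} = (1-u)\,\sigma_{1-2s}(A^{\alpha}).
\end{equation*}

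Finally, multiplicativity of $\sigma_{1-2s}$ together with $(A^{\alpha}, QA^{-\alpha}) = 1$ gives the factorisation $\sigma_{1-2s}(Q) = \sigma_{1-2s}(A^{\alpha}) \sigma_{1-2s}(QA^{-\alpha})$. Substituting this and the expression for $f_{A}(s)$ into the identity above yields
\begin{equation*}
\sum_{\substack{X \neq 0 \\ A \mid X}} \frac{C_{X}(Q)}{|X|^{2s}}
 = \frac{1}{\zeta_{\F_{q}[T]}(2s)} \left(\sigma_{1-2s}(Q) - \frac{\sigma_{1-2s}(QA^{-\alpha})}{1-q^{-2as}} \right),
\end{equation*}
which is the claim. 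No step looks delicate; the only bookkeeping to watch is the cancellation of $\sigma_{1-2s}(A^{\alpha})$ between $f_{A}(s)^{-1}$ and $\sigma_{1-2s}(Q)$, which is what leaves behind the $1/(1-q^{-2as})$ in the second term.
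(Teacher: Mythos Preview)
Your proof is correct and follows exactly the same decomposition as the paper: both split the sum as the full Dirichlet series minus the $A \nmid X$ part and then invoke Lemma~\ref{Dirichlet-convolution}. The paper simply asserts the second equality as ``a simple consideration,'' whereas you spell out the Euler factor $f_{A}(s)=(1-q^{-2as})\sigma_{1-2s}(A^{\alpha})$ and the cancellation of $\sigma_{1-2s}(A^{\alpha})$ explicitly; these are the details the paper omits.
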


\begin{proof}
A simple consideration shows that
\begin{equation*}
\sum_{\substack{X \ne 0 \\ A \mid X}} \frac{C_{X}(Q)}{|X|^{2s}}
 = \sum_{X \ne 0} \frac{C_{X}(Q)}{|X|^{2s}}-\sum_{\substack{X \ne 0 \\ A \nmid X}} \frac{C_{X}(Q)}{|X|^{2s}}
 = \frac{1}{\zeta_{\F_{q}[T]}(2s)} \left(\sigma_{1-2s}(Q)-\frac{\sigma_{1-2s}(QA^{-\alpha})}{1-q^{-2as}} \right),
\end{equation*}
as required.
\end{proof}

On the other hand, the following lemma is useful when we calculate the constant term in the Fourier--Whittaker expansion of the Eisenstein series.
\begin{lemma}\label{lemma34}
Keep the notation as above. Then
\begin{equation}\label{constant}
\sum_{\substack{X \ne 0 \\ A \mid X}} \frac{1}{|X|^{2s}} 
\sum_{\substack{Y \equiv 1 \tpmod A \\ Y \tpmod{AX} \\ (X, Y) = 1}} 1
 = \frac{q^{a(1-2s)}}{1-q^{-2as}} \frac{\zeta_{\F_{q}[T]}(2s-1)}{\zeta_{\F_{q}[T]}(2s)}.
\end{equation}
\end{lemma}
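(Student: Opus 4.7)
The strategy is to evaluate the inner counting sum in closed form, split the outer sum according to the $A$-adic valuation of $X$, and then recognise the resulting expression as an Euler product related to $\zeta_{\F_{q}[T]}$. Throughout I use that $A$ is a monic irreducible polynomial, so every monic $X$ with $A \mid X$ factors uniquely as $X = A^{\alpha} M$ with $\alpha \geq 1$ and $M$ monic and coprime to $A$.

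First I would compute the inner sum $N(X) \coloneqq \#\{Y \tpmod{AX} : Y \equiv 1 \tpmod{A},\ (X, Y) = 1\}$. Writing $X = A^{\alpha} M$ with $(A, M) = 1$, the modulus is $AX = A^{\alpha+1} M$, so the Chinese remainder theorem decomposes the condition into a piece mod $A^{\alpha+1}$ and a piece mod $M$. The constraint $Y \equiv 1 \tpmod{A}$ forces $(A^{\alpha}, Y) = 1$ automatically, so the coprimality condition $(X, Y) = 1$ reduces to $(M, Y) = 1$. Counting residues, the $A^{\alpha+1}$-component contributes $|A|^{\alpha}$ and the $M$-component contributes $\varphi(M)$, yielding $N(X) = |A|^{\alpha} \varphi(M)$.

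Next, I would substitute this into the left-hand side and use $|X|^{2s} = |A|^{2\alpha s} |M|^{2s}$ to factor the sum as
\begin{equation*}
\sum_{\alpha \geq 1} |A|^{\alpha(1-2s)} \cdot \sum_{\substack{M \text{ monic} \\ (M, A) = 1}} \frac{\varphi(M)}{|M|^{2s}}.
\end{equation*}
The first factor is a geometric series summing to $|A|^{1-2s}/(1-|A|^{1-2s})$. For the second factor, I would invoke the standard Dirichlet convolution identity $\varphi = \mu \ast \mathrm{id}$ to obtain
\begin{equation*}
\sum_{M \text{ monic}} \frac{\varphi(M)}{|M|^{2s}} = \frac{\zeta_{\F_{q}[T]}(2s-1)}{\zeta_{\F_{q}[T]}(2s)},
\end{equation*}
and then remove the Euler factor at $A$ to account for the coprimality condition, producing an additional factor $(1-|A|^{1-2s})/(1-|A|^{-2s})$.

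Finally, multiplying the two factors yields
\begin{equation*}
\frac{|A|^{1-2s}}{1-|A|^{1-2s}} \cdot \frac{1-|A|^{1-2s}}{1-|A|^{-2s}} \cdot \frac{\zeta_{\F_{q}[T]}(2s-1)}{\zeta_{\F_{q}[T]}(2s)} = \frac{q^{a(1-2s)}}{1-q^{-2as}} \cdot \frac{\zeta_{\F_{q}[T]}(2s-1)}{\zeta_{\F_{q}[T]}(2s)},
\end{equation*}
after the key algebraic cancellation of $1-|A|^{1-2s}$ and using $|A| = q^{a}$. This matches the right-hand side of~\eqref{constant}. The only conceptual subtlety is the clean cancellation between the geometric series in $\alpha$ and the Euler factor adjustment at $A$, which is what makes the denominator simplify from $1-q^{a(1-2s)}$ down to $1-q^{-2as}$; no single step is truly difficult, but one must set up the factorisation $X = A^{\alpha} M$ explicitly to see it.
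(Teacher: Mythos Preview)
Your argument is correct and is essentially the same as the paper's: both compute the inner sum in closed form (you via the Chinese remainder theorem, the paper via the equivalent observation that it equals $\varphi(AX)/\varphi(A)$) and then evaluate the resulting Dirichlet series as an Euler product. The only cosmetic difference is that you split off the $A$-adic valuation directly as a geometric series, whereas the paper substitutes $X = AX_{1}$ and packages the $A$-local behaviour into a modified multiplicative function $\tilde{\varphi}(X_{1}) = \varphi(A^{2}X_{1})/\varphi(A^{2})$; the same cancellation of $1 - |A|^{1-2s}$ occurs in both.
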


\begin{proof}
The inner sum on the left-hand side of~\eqref{constant} can be simplified as
\begin{equation*}
\sum_{\substack{Y \equiv 1 \tpmod A \\ Y \tpmod{AX} \\ (X, Y) = 1}} 1
 = \sum_{\substack{(AX, Y) = 1 \\ Y \tpmod{AX}}} \frac{1}{\varphi(A)}
 = \frac{\varphi(AX)}{\varphi(A)}.
\end{equation*}
If we set $X = AX_{1}$ and $\tilde{\varphi}(X_{1}) = \varphi(A^{2} X_{1})/\varphi(A^{2})$, then
\begin{equation*}
\frac{1}{\varphi(A)} \sum_{\substack{X \ne 0 \\ A \mid X}} \frac{\varphi(AX)}{|X|^{2s}}
 = \frac{\varphi(A^{2})}{\varphi(A)|A|^{2s}} 
\sum_{X_{1} \ne 0} \frac{\tilde{\varphi}(X_{1})}{|X_{1}|^{2s}}
 = q^{a(1-2s)} \prod_{P: \text{ irreducible}} \sum_{k = 0}^{\infty} \frac{\tilde{\varphi}(P^{k})}{|P|^{2ks}},
\end{equation*}
where $a = \deg A$. An explicit computation now yields
\begin{equation*}
\tilde{\varphi}(P^{k}) = 
	\begin{dcases}
	\varphi(P^{k}) = |P|^{k}-|P|^{k-1} & \text{if $P \ne A$ and $k \ne 0$},\\
	\dfrac{\varphi(A^{k+2})}{\varphi(A^{2})} = |A|^{k} & \text{if $P = A$},\\
	1 & \text{if $k = 0$},
	\end{dcases}
\end{equation*}
which implies
\begin{equation*}
q^{a(1-2s)} \left(\sum_{k = 0}^{\infty} \frac{|A|^{k}}{|A|^{2ks}} \right)
\prod_{P \ne A} \left(1+\sum_{k = 1}^{\infty} \frac{|P|^{k}-|P|^{k-1}}{|P|^{2ks}} \right)
 = \frac{q^{a(1-2s)}}{1-q^{-2as}} \frac{\zeta_{\F_{q}[T]}(2s-1)}{\zeta_{\F_{q}[T]}(2s)}.
\end{equation*}
The proof of Lemma~\ref{lemma34} is complete.
\end{proof}

The Fourier--Whittaker expansion of the Eisenstein series due to Li~\cite{Li1978} reads
\begin{equation}\label{FourierE}
E(g, s) = \sum_{\substack{Q \in \F_{q}[T] \\ \text{monic}}} c(n, Q, s) \chi_{Q}(x),
\end{equation}
where
\begin{equation*}
c(n, 0, s) = q^{ns}+q^{n(1- s)+1-a} \frac{1-q^{-2s}}{1-q^{1-2s}} 
\sum_{\substack{X \ne 0 \\ A \mid X}} \frac{1}{|X|^{2s}} 
\sum_{\substack{Y \equiv 1 \tpmod A \\ Y \tpmod{AX} \\ (X, Y) = 1}} 1,
\end{equation*}
$c(n, Q, s) = 0$ if $Q \ne 0$ and $n > a-2-\deg{Q}$, and
\begin{equation*}
c(n, Q, s) = q^{n(1- s)+1-a} \frac{(1-q^{-2s})(1-q^{(d+1)(1-2s)})}{1-q^{1-2s}} 
\sum_{\substack{X \ne 0 \\ A \mid X}} \frac{1}{|X|^{2s}} 
\sum_{\substack{Y \equiv 1 \tpmod A \\ Y \tpmod{AX} \\ (X,Y) = 1}} \chi_{Q} \left(\frac{Y}{X} \right)
\end{equation*}
with $d \coloneqq a-2-\deg{Q}-n$ for $n \leq a-2-\deg{Q}$. Lemmata~\ref{3.2}--\ref{lemma34} imply that
\begin{equation}\label{cn0s}
c(n, 0, s) = q^{ns}+\frac{q^{n(1- s)+1-2as}}{1-q^{-2as}},
\end{equation}
and if $Q \ne 0$ and $n \leq a-2-\deg{Q}$, then
\begin{equation}\label{cnQs}
c(n, Q, s) = q^{n(1-s)+1-a}(1-q^{-2s})(1-q^{(a-1-\deg{Q}-n)(1-2s)}) 
\left(\sigma_{1-2s}(Q)-\frac{\sigma_{1-2s}(QA^{-\alpha})}{1-q^{-2as}} \right).
\end{equation}

On the other hand, the Fourier--Whittaker expansion of cusp forms due to Dutta Gupta~\cite{DuttaGupta1997} reads\footnote{Dutta Gupta~\cite[Page~105]{DuttaGupta1997} assumes that $q \equiv 1 \pmod 4$ is a prime for simplicity, but the formul{\ae}~for the Fourier--Whittaker coefficients are still valid for any prime $q > 3$. In fact, such a restriction is not imposed in his statement of the Fourier--Whittaker expansion on~\cite[Page~103]{DuttaGupta1997}.}
\begin{equation}\label{Fourieru}
u(g) = \sum_{Q \ne 0} c(n, Q) \chi_{Q}(x), \qquad c(n, Q) = \frac{c(Q)}{\sqrt{|Q|}} W_{u}(QT^{n+2}),
\end{equation}
where for any unit $\epsilon \in r_{\infty}^{\ast}$, $W_{u}$ denotes the Whittaker function given by
\begin{equation}\label{Wu}
W_{u}(\epsilon T^{-\beta}) \coloneqq 
	\begin{dcases}
	\dfrac{q^{it(\beta+1)}-q^{-it(\beta+1)}}{q^{it}-q^{-it}} & \text{if $\beta \geq 0$},\\
	0 & \text{if $\beta < 0$},
	\end{dcases}
\end{equation}
where $\frac{1}{4}+t^{2}$ denotes the eigenvalue of the infinite Hecke operator associated to $u$. We recall the assumption $t \in \R^{\times}$. If $u$ is a newform, then Goss~\cite{Goss1980} and Li and Meemark~\cite{LiMeemark2008} proved that the normalised Fourier--Whittaker coefficients $\tilde{c}(Q) = c(Q)/c(1)$ are completely multiplicative, which ensures an Euler product of degree $1$:
\begin{equation*}
L(s, u) = \sum_{\substack{Q \in \F_{q}[T] \\ \text{monic}}} \frac{\tilde{c}(Q)}{|Q|^{s}}
 = \prod_{\substack{P \in \F_{q}[T] \\ \text{irreducible}}} \left(1-\frac{\tilde{c}(P)}{|P|^{s}} \right)^{-1}.
\end{equation*}

If $u$ is an oldform, then there exists a Hecke--Maa{\ss} cusp form $v(g)$ for $\Gamma_{0}(1)$ such that~either $u(g) = v(g)$ or $u(g) = v \left(\begin{psmallmatrix} A & 0 \\ 0 & 1 \end{psmallmatrix} g \right)$. In the former case, the normalised Fourier--Whittaker coefficients $\tilde{c}(Q)$ are similarly defined, and $L(s, u)$ has an Euler product of degree $1$. In the latter case, if the Fourier--Whittaker coefficients of $v(g)$ are denoted by $c^{\ast}(Q)$, then
\begin{equation}\label{Fourierv}
v(g) = \sum_{\substack{Q \in \F_{q}[T] \\ Q \ne 0}} \frac{c^{\ast}(Q)}{\sqrt{|Q|}} W_{u}(QT^{n+2}) \chi_{Q}(x).
\end{equation}
Since $\begin{psmallmatrix} A & 0 \\ 0 & 1 \end{psmallmatrix} \begin{psmallmatrix} T^{n} & x \\ 0 & 1 \end{psmallmatrix} = \begin{psmallmatrix} AT^{n} & Ax \\ 0 & 1 \end{psmallmatrix}$, we arrive at the expression
\begin{equation*}
u(g) = \sum_{\substack{Q \in \F_{q}[T] \\ Q \ne 0}} \frac{c^{\ast}(Q)}{\sqrt{|Q|}} W_{u}(QT^{n+a+2}) \chi_{QA}(x)
 = \sum_{\substack{Q \in \F_{q}[T] \\ A \mid Q \ne 0}} \frac{c^{\ast}(Q/A)}{\sqrt{|Q/A|}} W_{u}(QT^{n+2}) \chi_{Q}(x).
\end{equation*}
This yields the relation
\begin{equation}\label{relation}
c(Q) = 
	\begin{dcases}
	\sqrt{|A|} \, c^{\ast} \left(\frac{Q}{A} \right) & \text{if $A \mid Q$},\\
	0 & \text{otherwise}.
	\end{dcases}
\end{equation}

The following lemma is an analogue of~\cite[Lemma~2.4]{Koyama2009}; see also~\cite{KanekoKoyama2020} for corrections.
\begin{lemma}\label{lemm:NewOld}
Keep the notation as above. If $u$ is a newform for $\Gamma_{0}(A)$ or a Hecke--Maa{\ss}~cusp form for $\Gamma_{0}(1)$, then
\begin{equation}
\sum_{\substack{Q \ne 0 \\ \text{monic}}}
\frac{\tilde{c}(Q) \sigma_{\nu}(Q)}{|Q|^{s}} = \frac{L(s, u)L(s-\nu, u)}{\zeta_{\F_{q}[T]}(2s-\nu)}.
\end{equation}
If $u$ is an oldform for $\Gamma_{0}(A)$ and $u(g) = v \left(\begin{psmallmatrix} A & 0 \\ 0 & 1 \end{psmallmatrix} g \right)$, then
\begin{equation}
\sum_{\substack{Q \ne 0 \\ \text{monic}}} \frac{c(Q) \sigma_{\nu}(Q)}{|Q|^{s}}
 = c^{\ast}(1) |A|^{\frac{1}{2}-s} \frac{1+|A|^{\nu}-\tilde{c}^{\ast}(A) |A|^{\nu-s}}{1-|A|^{\nu-2s}} 
\frac{L(s, v)L(s-\nu, v)}{\zeta_{\F_{q}[T]}(2s-\nu)}.
\end{equation}
\end{lemma}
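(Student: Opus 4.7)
The plan is to compute each side as a Dirichlet series over monic $Q \in \F_q[T]$, factor into Euler products over irreducibles $P$, and match local factors. For the first identity, the product $\tilde c(Q)\sigma_\nu(Q)$ of two multiplicative functions (with $\sigma_\nu(Q) = \sum_{D \mid Q,\, D \text{ monic}} |D|^\nu$) is itself multiplicative, so the series factors as an Euler product. A direct local computation at each irreducible $P$, using the degree-one shape of $L(s,u)$ (or of $L(s,v)$ in the level-$1$ case) together with the Euler factor of $\zeta_{\F_q[T]}(2s-\nu)$ read off from~\eqref{zeta}, matches the corresponding factor of $L(s,u)L(s-\nu,u)/\zeta_{\F_q[T]}(2s-\nu)$ and establishes the Rankin--Selberg-type identity.

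For the oldform case, I invoke~\eqref{relation}, so that $c(Q) = \sqrt{|A|}\, c^{\ast}(Q/A)$ when $A \mid Q$ and $c(Q) = 0$ otherwise. Substituting $Q = AR$ extracts the prefactor $c^{\ast}(1)\,|A|^{\frac{1}{2}-s}$ and reduces matters to the inner sum $\sum_R \tilde c^{\ast}(R)\sigma_\nu(AR)/|R|^s$. Since $A$ is irreducible, I parametrise $R = A^j M$ with $j \geq 0$ and $A \nmid M$; the multiplicativity of $\tilde c^{\ast}$ and of $\sigma_\nu$ then separates the sum into a $j$-sum carrying $\tilde c^{\ast}(A)^j \sigma_\nu(A^{j+1})\,|A|^{-js}$ and an $M$-sum over monic $M$ coprime to $A$. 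The $M$-sum equals the first identity applied to $v$, with the $A$-local Euler factor of $L(s,v)L(s-\nu,v)/\zeta_{\F_q[T]}(2s-\nu)$ removed; the $j$-sum is evaluated by expanding $\sigma_\nu(A^{j+1})$ as a truncated geometric progression in $|A|^\nu$ and performing two geometric summations.

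The main obstacle will be the final algebraic reconciliation in the oldform case: combining the closed form of the $j$-sum with the excised $A$-local Euler factor and checking that the resulting rational function in $|A|^{\pm s}$, $|A|^{\pm \nu}$, $\tilde c^{\ast}(A)$ telescopes to the compact expression $(1+|A|^\nu - \tilde c^{\ast}(A)|A|^{\nu-s})/(1-|A|^{\nu-2s})$. This reduces to a routine polynomial identity, but requires careful bookkeeping of signs and exponents. The first identity, by contrast, is essentially a direct local computation and presents no genuine difficulty beyond matching Euler factors.
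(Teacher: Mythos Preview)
Your proposal is correct and follows essentially the same route as the paper. For the first identity both arguments factor the multiplicative Dirichlet series into an Euler product and match local factors; for the oldform case the paper introduces the auxiliary multiplicative function $\tilde\sigma_\nu(AQ) \coloneqq \sigma_\nu(AQ)/\sigma_\nu(A)$ to factor the sum directly as $\prod_P \sum_k \tilde c^{\ast}(P^k)\tilde\sigma_\nu(AP^k)|P|^{-ks}$ and then distinguishes $P=A$ from $P\neq A$, which is exactly your decomposition $R=A^jM$ written in Euler-product language (and the paper likewise defers the final algebraic simplification to the analogous computation in~\cite{Koyama2009}).
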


\begin{proof}
The first claim follows from
\begin{equation*}
\sum_{\substack{Q \ne 0 \\ \text{monic}}} \frac{\tilde{c}(Q) \sigma_{\nu}(Q)}{|Q|^{z}}
 = \prod_{\substack{P: \text{ monic} \\ \text{irreducible}}} \sum_{k = 0}^{\infty} \frac{\tilde{c}(P^{k}) \sigma_{\nu}(P^{k})}{|P|^{ks}}\\
%& = \prod_P \sum_{k = 0}^{\infty} \frac{\tilde{c}(P^{k})(1+|P|^{\nu}+|P|^{2\nu}+\dots+|P|^{k \nu})}{|P|^{kz}}\\
%& = \prod_P\sum_{k = 0}^{\infty} \frac{\tilde{c}(P^{k})}{|P|^{kz}} \frac{1-|P|^{(k+1) \nu}}{1-|P|^{\nu}}\\
%& = \prod_P\frac{1}{1-|P|^{\nu}}
%\left(\sum_{k = 0}^{\infty} \frac{\tilde{c}(P^{k})}{|P|^{kz}}-\sum_{k = 0}^{\infty} \frac{\tilde{c}(P^{k})}{|P|^{k(z-\nu)}}|P|^{\nu} \right)\\
%& = \prod_P \frac{1}{1-|P|^{\nu}} \left(\frac{1}{1-\frac{\tilde{c}(P)}{|P|^z}}-\frac{|P|^{\nu}}{1-\frac{\tilde{c}(P)}{|P|^{z-\nu}}} \right)\\
%& = \prod_P \frac{1}{\left(1-\frac{\tilde{c}(P)}{|P|^z} \right) \left(1-\frac{\tilde{c}(P)}{|P|^{z-\nu}} \right)}\\
 = \frac{L(s, u) L(s-\nu, u)}{\zeta_{\F_{q}[T]}(2s-\nu)}.
\end{equation*}
For the second claim, the relation~\eqref{relation} implies
\begin{equation}\label{oldformsum}
\sum_{\substack{Q \ne 0 \\ \text{monic}}} \frac{c(Q) \sigma_{\nu}(Q)}{|Q|^{s}}
 = \sum_{\substack{Q \ne 0 \\ \text{monic}}} \frac{c(AQ) \sigma_{\nu}(AQ)}{|AQ|^{s}}
% = |A|^{\frac{1}{2}-s} \sum_{\substack{Q \ne 0 \\ \text{monic}}} 
%\frac{c^{\ast}(Q) \sigma_{\nu}(AQ)}{|Q|^{s}}\\
% = |A|^{\frac{1}{2}-s} c^{\ast}(1) \sigma_{\nu}(A) \sum_{\substack{Q \ne 0 \\ \text{monic}}} 
%\frac{\tilde{c}^{\ast}(Q) \tilde{\sigma}_{\nu}(AQ)}{|Q|^{s}}\\
 = |A|^{\frac{1}{2}-s} c^{\ast}(1) \sigma_{\nu}(A) \prod_{\substack{P: \text{ monic} \\ \text{irreducible}}} \sum_{k = 0}^{\infty} \frac{\tilde{c}^{\ast}(P^{k}) \tilde{\sigma}_{\nu}(AP^{k})}{|P|^{ks}},
\end{equation}
where $\tilde{\sigma}_{\nu}(AQ) = \sigma_{\nu}(AQ)/\sigma_{\nu}(A)$ is multiplicative in $Q$. Note that
\begin{equation}\label{tildesigma}
\tilde{\sigma}_{\nu}(AP^{k}) = 
	\begin{dcases}
	\dfrac{1-|A|^{(k+2) \nu}}{1-|A|^{2\nu}} & \text{if $P = A$},\\
	\dfrac{1-|P|^{(k+1) \nu}}{1-|P|^{\nu}} & \text{if $P\ne A$}.
	\end{dcases}
\end{equation}
Substituting~\eqref{tildesigma} into~\eqref{oldformsum} and following the proof of~\cite[Lemma 2.4]{Koyama2009}, we obtain the second claim.
\end{proof}

\begin{proof}[Proof of Theorem~\ref{3.1}.]
Unfolding gives
\begin{equation}\label{cusp1}
\int_{\Gamma \backslash \mathbb{H}} u(g)|E(g, s)|^{2} dg
 = \int_{\Gamma \backslash \mathbb{H}} u(g) \overline{E(g, s)}
\sum_{\gamma \in \Gamma_{\infty} \backslash \Gamma} \psi_{s}(\gamma g) dg
 = \int_{\Gamma_{\infty} \backslash \mathbb{H}} u(g) \overline{E(g, s)} q^{ns} dg.
\end{equation}
Substituting~\eqref{FourierE} and~\eqref{Fourieru} into~\eqref{cusp1} and using~\eqref{orthogonality-of-chi},~\eqref{cn0s}, and~\eqref{cnQs}, we derive
\begin{multline}\label{sumQ}
\sum_{n \in \Z} \sum_{Q \ne 0} \frac{c(Q)}{\sqrt{|Q|}} q^{ns} c(n, Q, s) W_{u}(QT^{n+2})\\
 = q^{1-a}(1-q^{-2s}) \sum_{Q \ne 0} \frac{c(Q)}{\sqrt{|Q|}} 
\left(\sigma_{1-2s}(Q)-\frac{\sigma_{1-2s}(QA^{-\alpha})}{1-q^{-2as}} \right)\\
\times \sum_{n = -\infty}^{a-2-\deg{Q}} q^{n}(1-q^{(a-1-\deg{Q}-n)(1-2s)}) W_{u}(QT^{n+2}).
\end{multline}
It follows from~\eqref{Wu} with $\beta = -n-2-\deg{Q}$ that the first term in the sum over $n$ contributes
\begin{equation*}
\sum_{n = -\infty}^{a-2-\deg{Q}} q^{n} \frac{q^{-it(n+1+\deg{Q})}-q^{it(n+1+\deg{Q})}}{q^{it}-q^{-it}}
 = \frac{q^{a-2-\deg Q}}{q^{it}-q^{-it}} \left(\frac{q^{-(a-1)it}}{1-q^{-(1-it)}}-\frac{q^{(a-1)it}}{1-q^{-(1+it)}} \right),
\end{equation*}
while the the second term contributes
\begin{multline*}
\sum_{n = -\infty}^{-2-\deg{Q}} q^{2ns+(1-2s)(a-1-\deg{Q})} \frac{q^{-it(n+1+\deg{Q})}-q^{it(n+1+\deg{Q})}}{q^{it}-q^{-it}}\\
 = \frac{q^{a-1-\deg{Q}}}{q^{it}-q^{-it}} \left(\frac{q^{-iat}}{1-q^{2s+it}}-\frac{q^{iat}}{1-q^{2s-it}} \right).
\end{multline*}
Lemma~\ref{lemm:NewOld} implies that the sum over $Q$ in~\eqref{sumQ} is bounded. To conclude the proof, recall that $a = \deg A$ and thus the right-hand side of~\eqref{sumQ} is bounded by $O(|A|^{-1})$.
\end{proof}

\subsection{Eisenstein contribution}\label{contribution-of-the-Eisenstein-spectrum}
In this section, we analyse the inner product
\begin{equation*}
\mathcal{I} \coloneqq \langle |E|^{2}, E(\ast|\psi) \rangle
 = \int_{\Gamma_{0}(A) \backslash \mathbb{H}} E(g|\psi) \left|E \left(g, \frac{1}{2}+it \right) \right|^{2} dg,
\end{equation*}
where $\psi \in \mathcal{C}_{0}^{\infty}(\R_{> 0})$. The Mellin transform of $\psi$ is defined by
\begin{equation*}
H(s) \coloneqq \sum_{n \in \Z} \psi(q^{n}) q^{-ns},
\end{equation*}
and Mellin inversion gives
\begin{equation*}
\psi(q^{n}) = \log q \int_{-\frac{\pi i}{\log q}}^{\frac{\pi i}{\log q}} H(s) q^{ns} \frac{ds}{2\pi i},
\end{equation*}
where the contour is along the imaginary axis. Unfolding then yields
\begin{align*}
\mathcal{I} &= \int_{\Gamma_{0}(A) \backslash \mathbb{H}} \sum_{\gamma \in \Gamma_{\infty} \backslash \Gamma_{0}(A)}
\psi \left(\frac{|\det(\gamma g)|}{h((0,1) \gamma g)^{2}} \right) \left|E \left(g, \frac{1}{2}+it \right) \right|^{2} dg\\
& = \log q \int_{-\frac{\pi i}{\log q}}^{\frac{\pi i}{\log q}} H(s) \int_{\Gamma_{\infty} \backslash \mathbb{H}}
\left(\frac{|\det(g)|}{h((0,1) g)^{2}} \right)^{s} \left|E \left(g, \frac{1}{2}+it \right) \right|^{2} dg \frac{ds}{2\pi i}.
\end{align*}
If $g = \begin{psmallmatrix} T^{n} & x \\ 0 & 1 \end{psmallmatrix}$, then $h((0, 1) g) = 1$. Hence, the problem reduces to the estimation of
\begin{equation*}
\mathcal{I} = \log q \int_{-\frac{\pi i}{\log q}}^{\frac{\pi i}{\log q}} H(s) \sum_{n \in \Z} q^{ns}
\int_{\F_{q}((T^{-1}))/\F_{q}[T]} \left|E \left(g, \frac{1}{2}+it \right) \right|^{2} dx \frac{ds}{2\pi i}.
\end{equation*}
The Fourier--Whittaker expansion~\eqref{FourierE} now leads to the decomposition
\begin{equation}\label{I}
\mathcal{I} = \mathcal{I}_{1}+\mathcal{I}_{2},
\end{equation}
where
\begin{align*}
\mathcal{I}_{1} &\coloneqq \log q \int_{-\frac{\pi i}{\log q}}^{\frac{\pi i}{\log q}} H(s) 
\sum_{n \in \Z} q^{ns} \left|c \left(n, 0, \frac{1}{2}+it \right) \right|^{2} \frac{ds}{2\pi i},\\
\mathcal{I}_{2} &\coloneqq \log q \int_{-\frac{\pi i}{\log q}}^{\frac{\pi i}{\log q}} H(s) 
\sum_{n \in \Z} \sum_{\substack{Q \in \F_{q}[T] \\ \text{monic}}} 
q^{ns} \left|c \left(n, Q, \frac{1}{2}+it \right) \right|^{2} \frac{ds}{2\pi i}.
\end{align*}
We first prove that $\mathcal{I}_{1}$ is negligibly small.
\begin{lemma}\label{computation-of-I_1}
Keep the notation as above. Then
\begin{equation*}
\mathcal{I}_{1} \ll 1.
\end{equation*}
\end{lemma}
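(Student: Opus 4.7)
The plan is to use Mellin inversion to simplify $\mathcal{I}_{1}$, then apply the closed-form expression~\eqref{cn0s} for $c(n,0,\frac{1}{2}+it)$ together with the compact support of $\psi$.

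First, since $\psi \in \mathcal{C}_{0}^{\infty}(\R_{>0})$, the sum $H(s) = \sum_{n} \psi(q^{n}) q^{-ns}$ has only finitely many nonzero terms, and Mellin inversion recovers $\psi(q^{n})$ from $H$. Interchanging the sum over $n$ with the contour integral (justified a posteriori, since only finitely many $n$ survive), I would obtain
\begin{equation*}
\mathcal{I}_{1} = \sum_{n \in \Z} \psi(q^{n}) \left|c\left(n, 0, \tfrac{1}{2}+it\right)\right|^{2}.
\end{equation*}
In particular, the compact support of $\psi$ forces $\psi(q^{n}) \neq 0$ only for $n$ in a finite range $[N_{-}, N_{+}]$ that depends solely on $\psi$, not on $A$ or $t$.

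Next, I would substitute \eqref{cn0s} at $s = \frac{1}{2}+it$ and reorganise to get
\begin{equation*}
c\left(n, 0, \tfrac{1}{2}+it\right) = q^{n/2}\bigl(q^{nit} + \delta_{a}(t)\, q^{-nit}\bigr), \qquad \delta_{a}(t) \coloneqq \frac{q^{1-a-2ait}}{1-q^{-a-2ait}},
\end{equation*}
where $a = \deg A$. Since $|q^{-a-2ait}| = q^{-a}$, the denominator satisfies $|1 - q^{-a-2ait}| \geq 1 - q^{-a} \geq \tfrac{1}{2}$ for $a \geq 1$, hence $|\delta_{a}(t)| \leq 2q^{1-a}$ uniformly in $t$. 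Consequently
\begin{equation*}
\left|c\left(n, 0, \tfrac{1}{2}+it\right)\right|^{2} = q^{n}\bigl(1 + 2\Re(\delta_{a}(t)\, q^{-2nit}) + |\delta_{a}(t)|^{2}\bigr) \ll q^{n}
\end{equation*}
uniformly in $a$ and $t$.

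Combining these observations,
\begin{equation*}
\mathcal{I}_{1} \ll \sum_{n = N_{-}}^{N_{+}} \psi(q^{n})\, q^{n} \ll_{\psi} 1,
\end{equation*}
as the latter is a finite sum whose size depends only on $\psi$. The argument is essentially computational; there is no genuine obstacle, once one observes that the Mellin integral simply recovers $\psi$ at the points $q^{n}$ and that compact support reduces $\mathcal{I}_{1}$ to a sum over $O_{\psi}(1)$ terms, each of size $O(1)$.
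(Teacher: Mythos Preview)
Your argument is correct and follows the same route as the paper: bound $\bigl|c(n,0,\tfrac12+it)\bigr|^{2}\ll q^{n}$ via~\eqref{cn0s}, then use the compact support of $\psi$ to reduce to a finite sum. You are more explicit than the paper about the Mellin-inversion step that collapses $\mathcal{I}_{1}$ to $\sum_{n}\psi(q^{n})\bigl|c(n,0,\tfrac12+it)\bigr|^{2}$, which the paper leaves implicit in the phrase ``since the sum over $n$ is bounded''.
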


\begin{proof}
It follows from~\eqref{cn0s} that
\begin{equation*}
\left|c \left(n, 0, \frac{1}{2}+it \right) \right|^{2} \leq q^{n}+\frac{q^{n+2-2a}}{|1-q^{-a(1+2it)}|^{2}}.
\end{equation*}
The contribution of the first term dominates that of the second term. Since the sum over~$n$~is bounded, the proof of Lemma~\ref{computation-of-I_1} is complete.
\end{proof}

We now estimate $\mathcal{I}_{2}$ and prove that the contribution of $\mathcal{I}_{2}$ dominates that of $\mathcal{I}_{1}$.
\begin{lemma}\label{computation-of-I_2}
Keep the notation as above. Then
\begin{equation*}
\mathcal{I}_{2} = \frac{(1+q^{-1})}{2\log q} \log|A| 
\int_{\Gamma_{0}(1) \backslash \mathbb{H}} E(g|\psi) dg+O(|A|^{-1}).
\end{equation*}
\end{lemma}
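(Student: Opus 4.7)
My plan is to apply Plancherel and evaluate the resulting Dirichlet series via a residue computation at a Rankin--Selberg double pole. First, invoking~\eqref{orthogonality-of-chi}, the inner integral $\int |E(g, 1/2+it)|^{2}\,dx$ over $\F_{q}((T^{-1}))/\F_{q}[T]$ collapses to $\sum_{Q \ne 0,\,\text{monic}} |c(n, Q, 1/2+it)|^{2}$ by Parseval. Plugging in the explicit formula~\eqref{cnQs} and performing the geometric sum in $n$ (through the substitution $m = a - 1 - \deg Q - n \ge 1$), the expression for $\mathcal{I}_{2}$ factors as
\[
\mathcal{I}_{2} = \log q \int_{-\pi i/\log q}^{\pi i/\log q} H(s)\,q^{1-a+(a-1)s}\,|1-q^{-1-2it}|^{2}\,T(s, t)\,D(s)\,\frac{ds}{2\pi i},
\]
where $T(s, t) = \sum_{m \ge 1} q^{-m(s+1)} |1-q^{-2itm}|^{2}$ and $D(s) = \sum_{Q \ne 0,\,\text{monic}} |\sigma(Q)|^{2}/|Q|^{s+1}$, with $\sigma(Q)$ the bracketed quantity in~\eqref{cnQs}.

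Next, I will decompose $Q = A^{\alpha} Q'$ with $(Q', A) = 1$. Multiplicativity of $\sigma_{-2it}$ yields the factorisation $\sigma(Q) = \sigma_{-2it}(Q') \cdot [\sigma_{-2it}(A^{\alpha}) - (1-q^{-a-2iat})^{-1}]$, so that $D(s) = S_{A}(s)\,R(s)$, with $S_{A}(s)$ a local $A$-sum over $\alpha$ and
\[
R(s) = \sum_{\substack{Q' \text{ monic}\\(Q', A) = 1}} \frac{|\sigma_{-2it}(Q')|^{2}}{|Q'|^{s+1}} = \frac{\zeta_{\F_{q}[T]}(s+1)^{2}\,\zeta_{\F_{q}[T]}(s+1+2it)\,\zeta_{\F_{q}[T]}(s+1-2it)}{\zeta_{\F_{q}[T]}(2s+2)\,L_{A}(s+1)},
\]
by the classical Rankin--Selberg identity, where $L_{A}$ is the local Euler factor at $A$. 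Since $\zeta_{\F_{q}[T]}(s+1)^{2} = (1-q^{-s})^{-2}$ by~\eqref{zeta}, this exhibits a double pole at $s = 0$, which will be the source of the main term.

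The main step is to shift the contour past $s = 0$ and extract the residue. Using the Laurent expansion $\zeta_{\F_{q}[T]}(s+1) = (s\log q)^{-1} + \tfrac{1}{2} + O(s)$, the residue of the integrand involves the derivative of the regular part; because of the explicit factor $q^{(a-1)s}$, differentiating at $s = 0$ produces $(a-1)\log q$, which after the outer $\log q$ gives a main term of size $\log|A|$. The numerical coefficient $(1+q^{-1})/(2\log q)$ will then emerge from the combination $1/\zeta_{\F_{q}[T]}(2) = 1 - q^{-1}$ with the regular evaluations at $s = 0$ of $L_{A}(1)$, $\zeta_{\F_{q}[T]}(1 \pm 2it)$, $|1-q^{-1-2it}|^{2}$, $T(0, t)$, and the $\alpha$-sum $S_{A}(0)$. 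By unfolding, $H(0) = \sum_{n} \psi(q^{n}) = \int_{\Gamma_{0}(1)\backslash \mathbb{H}} E(g|\psi)\,dg$, completing the identification of the main term.

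The remaining shifted-contour integral is $O(|A|^{-1})$ by an elementary vertical-strip bound on $H(s)$ (which is entire since $\psi$ is compactly supported) together with polynomial bounds on the zeta factors; the $\alpha = 0$ contribution to $D(s)$ is also $O(|A|^{-2})$, because $|\sigma(Q)|^{2} \sim q^{-2a}|\sigma_{-2it}(Q)|^{2}$ whenever $A \nmid Q$. The main obstacle I expect is the bookkeeping of the final cancellation: verifying that the local factor $L_{A}(1)^{-1}$, the $\alpha$-sum $S_{A}(0)$, the oscillatory factor $|1-q^{-1-2it}|^{2}$, the finite sum $T(0, t)$, and the regular zeta factors $\zeta_{\F_{q}[T]}(1 \pm 2it)$ consolidate exactly to $(1+q^{-1})/2$ independently of $t$, in parallel with the delicate cancellations present in~\cite[Lemma~2.4]{Koyama2009} and its corrections in~\cite{KanekoKoyama2020}.
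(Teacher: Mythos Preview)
Your overall strategy---Parseval on the $x$-integral, insertion of the explicit formula~\eqref{cnQs}, reduction to a Dirichlet series with a Rankin--Selberg double pole at $s=0$, and extraction of the residue---coincides with the paper's. The two proofs diverge in how the bracketed factor $\sigma(Q)=\sigma_{-2it}(Q)-\sigma_{-2it}(QA^{-\alpha})/(1-q^{-a(1+2it)})$ is treated: the paper simply replaces $|\sigma(Q)|^{2}$ by $|\sigma_{-2it}(Q)|^{2}$, invokes Ramanujan's identity directly, and declares the remaining terms negligible, whereas you retain the full bracket and factor $D(s)=S_{A}(s)R(s)$ through $Q=A^{\alpha}Q'$. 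Your route is more transparent about the $A$-dependence, but it also forces you to track the local sum $S_{A}(s)$ through the residue calculation, which is where your plan becomes incomplete.

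The concrete gap is the claim that the $\log|A|$ arises solely from differentiating $q^{(a-1)s}$ at $s=0$. In your factorisation the regular part at the pole is (up to $A$-independent factors) $q^{(a-1)s}S_{A}(s)$; since $S_{A}(s)=\sum_{\alpha\ge 0}|B_{\alpha}|^{2}|A|^{-\alpha(s+1)}$ with the dominant contribution from $\alpha=1$, one has $S_{A}'(0)/S_{A}(0)\approx -\log|A|$, which threatens to cancel the $(a-1)\log q$ you isolated. You therefore cannot read off the $\log|A|$ growth without computing $S_{A}(0)$ and $S_{A}'(0)$ explicitly and checking that the net logarithmic derivative really is $\log|A|+O(1)$ with the required coefficient. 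This is exactly the ``bookkeeping of the final cancellation'' you flag as the obstacle; it is not merely bookkeeping but determines whether the main term survives at all at the stated order.

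A second discrepancy is your identification $H(0)=\int_{\Gamma_{0}(1)\backslash\mathbb{H}}E(g|\psi)\,dg$. The paper instead writes $H(0)=\int_{\Gamma_{0}(A)\backslash\mathbb{H}}E(g|\psi)\,dg=m\int_{\Gamma_{0}(1)\backslash\mathbb{H}}E(g|\psi)\,dg$ with $m=|A|+1$ via Lemma~\ref{index}, and this factor of $m$ is essential to the paper's count of $|A|$-powers. Whether your version or the paper's is the correct interpretation depends on whether $E(g|\psi)$ denotes the level-$1$ or level-$A$ incomplete series in the statement of the lemma; you should make this explicit, since a mismatch here shifts the answer by a factor of $|A|$. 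Finally, you omit the simple poles at $s=\pm 2it$ on the contour, which the paper mentions and dismisses; these should at least be noted and bounded.
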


\begin{proof}
It follows from~\eqref{cnQs} that
\begin{equation*}\label{cnq1/2+it}
\left|c \left(n, Q, \frac{1}{2}+it \right) \right|^{2}
 = q^{n+2-2a} |1-q^{-1-2it}|^{2} |1-q^{-2it(a-1-\deg{Q}-n)}|^{2} 
\left|\sigma_{-2it}(Q)-\frac{\sigma_{-2it}(QA^{-\alpha})}{1-q^{-a(1+2it)}} \right|^{2},
\end{equation*}
where $Q \ne 0$ and $n \leq a-2-\deg{Q}$. %The identity
%\begin{equation*}
%|1-q^{-2it(a-1-\deg{Q}-n)}|^{2} = 2-q^{-2it(a-1-\deg{Q}-n)}-q^{2it(a-1-\deg{Q}-n)}
%\end{equation*}
Henceforth, we focus on the analysis of the first~term $\sigma_{-2it}(Q)$ since the second term contributes negligibly due to the extra factor $A^{-\alpha}$, obtaining
\begin{multline}\label{suminI2}
\sum_{n \in \Z} \sum_{\substack{Q \in \F_{q}[T] \\ \text{monic}}} q^{ns} \left|c \left(n, Q, \frac{1}{2}+it \right) \right|^{2}
 = q^{2-2a} |1-q^{-1-2it}|^{2} \sum_{\substack{Q \in \F_{q}[T] \\ \text{monic}}} |\sigma_{2it}(Q)|^{2}\\
\times \sum_{n = -\infty}^{a-2-\deg{Q}} q^{n(s+1)}(2-q^{-2it(a-1-\deg{Q}-n)}-q^{2it(a-1-\deg{Q}-n)})+(\cdots),
\end{multline}
where the ellipsis hides an admissible error term. A brute force computation shows that the sum over $n$ in~\eqref{suminI2} is equal to
\begin{equation*}
%\sum_{n = -\infty}^{a-2-\deg{Q}} (2q^{n(1+s)}-q^{n(1+s+2it)-2it(a-1-\deg{Q})}-q^{n(1+s-2it)+2it(a-1-\deg{Q})})\\
%& = \frac{2q^{(a-2-\deg{Q})(1+s)}}{1-q^{-1-s}}-\frac{q^{(a-2-\deg{Q})(1+s+2it)-2it(a-1-\deg{Q})}}{1-q^{-(1+s+2it)}}
%-\frac{q^{(a-2-\deg{Q})(1+s-2it)+2it(a-1-\deg{Q})}}{1-q^{-(1+s-2it)}}\\
%& = q^{(a-2-\deg{Q})(1+s)} 
%\left(\frac{2}{1-q^{-1-s}}-\frac{q^{-2it}}{1-q^{-(s+1+2it)}}-\frac{q^{2it}}{1-q^{-(s+1-2it)}} \right)\\
\left(\frac{|A|}{q^{2}|Q|} \right)^{s+1} 
\left(\frac{2}{1-q^{-(s+1)}}-\frac{q^{-2it}}{1-q^{-(s+1+2it)}}-\frac{q^{2it}}{1-q^{-(s+1-2it)}} \right).
\end{equation*}
Moreover, the function field analogue of Ramanujan's identity~\cite{Ramanujan1916} (see~\cite{GunMurty2016} for example) reads
\begin{align*}
\sum_{\substack{Q \in \F_{q}[T] \\ \text{monic}}} \frac{|\sigma_{2it}(Q)|^{2}}{|Q|^{s+1}}
& = \frac{\zeta_{\F_{q}[T]}(s+1)^{2} \zeta_{\F_{q}[T]}(s+1+2it) \zeta_{\F_{q}[T]}(s+1-2it)}{\zeta_{\F_{q}[T]}(2s+2)}\\
& = \frac{1-q^{-2s-1}}{(1-q^{-s})^{2}(1-q^{-s+2it})(1-q^{-s-2it})}.
\end{align*}
Hence, the integral $\mathcal{I}_{2}$ boils down to
\begin{multline}\label{I2}
q^{2-2a} |1-q^{-1-2it}|^{2} \log q \int_{-\frac{\pi i}{\log q}}^{\frac{\pi i}{\log q}} H(s)
\left(\frac{|A|}{q^{2}} \right)^{s+1} \frac{1-q^{-2s-1}}{(1-q^{-s})^{2}(1-q^{-s+2it})(1-q^{-s-2it})}\\
\times \left(\frac{2}{1-q^{-(s+1)}}-\frac{q^{-2it}}{1-q^{-(s+1+2it)}}-\frac{q^{2it}}{1-q^{-(s+1-2it)}} \right) \frac{ds}{2\pi i}.
\end{multline}
Note that the factor
\begin{equation*}
\frac{1}{(1-q^{-s})^{2}} = \frac{1}{(s \log q)^{2}}+\frac{1}{s \log q}+O(1)
\end{equation*}
has a double pole at $s = 0$. If the integrand in~\eqref{I2} is denoted by $G(s) H(s)(1-q^{-s})^{-2}$, then the residue at $s = 0$ is
\begin{equation}\label{residue-at-0}
\frac{G(0) H(0)}{(\log q)^{2}} \left(\frac{G^{\prime}(0)}{G(0)}+\frac{H^{\prime}(0)}{H(0)}+\log q \right).
\end{equation}
The definition of $G(s)$ gives
\begin{equation}\label{G0}
G(0) = \frac{|A|(1-q^{-1})}{q^{2} |1-q^{2it}|^{2}} \left(\frac{2}{1-q^{-1}}-\frac{q^{-2it}}{1-q^{-1-2it}}-\frac{q^{2it}}{1-q^{-1+2it}} \right)
 = \frac{|A|(1+q^{-1})}{q^{2}|1-q^{-1-2it}|^{2}}
\end{equation}
and
\begin{equation*}
\frac{G^{\prime}(0)}{G(0)} = \log|A|+O(1),
\end{equation*}
yielding
\begin{equation*}
\mathop{\mathrm{res}}_{s = 0} \left(\frac{G(s) H(s)}{(1-q^{-s})^{2}} \right)
 = \frac{|A| \log|A|(1+q^{-1})H(0)}{(q \log q)^{2}|1-q^{-1-2it}|^{2}}+O(|A|).
\end{equation*}
On the other hand, the definition of $H(s)$ gives
\begin{equation*}
H(0) = \sum_{n \in \Z} \psi(q^{n})
 = \int_{\Gamma_{\infty} \backslash \mathbb{H}} \psi \left(\frac{|\det(g)|}{h((0,1) g)^{2}} \right) dg
 = \int_{\Gamma_{0}(A) \backslash \mathbb{H}} E(g|\psi) dg = m \int_{\Gamma_{0}(1) \backslash \mathbb{H}} E(g|\psi) dg,
\end{equation*}
where we relate the inner product of level $A$ to that of level $1$ by folding the integration.~By Lemma~\ref{index}, we conclude that
\begin{equation}\label{res}
\mathop{\mathrm{res}}_{s = 0} \left(\frac{G(s) H(s)}{(1-q^{-s})^{2}} \right)
 = \frac{|A|^{2} \log|A|(1+q^{-1})}{(q \log q)^{2}|1-q^{-1-2it}|^{2}} 
\int_{\Gamma_{0}(1) \backslash \mathbb{H}} E(g|\psi) dg+O(|A|),
\end{equation}
where the contribution of $H^{\prime}(s)/H(s)$ in~\eqref{residue-at-0} is bounded by $O(|A|)$, since $H(s)$ is entire in $s$. The contribution of the simple poles at $s = \pm 2it$ is negligibly small as in~\cite{LuoSarnak1995}.

To examine~\eqref{I2}, we consider the integral over a rectangle $\mathcal{C} = \mathcal{C}_{0} \cup \mathcal{C}_{1} \cup \mathcal{C}_{2} \cup \mathcal{C}_{3}$, where
\begin{alignat*}{2}
\mathcal{C}_{0} &: -\frac{\pi i}{\log q} \longrightarrow \frac{\pi i}{\log q},
&& \mathcal{C}_{1}: \frac{\pi i}{\log q} \longrightarrow R+\frac{\pi i}{\log q},\\
\mathcal{C}_{2} &: R+\frac{\pi i}{\log q} \longrightarrow R-\frac{\pi i}{\log q}, \qquad 
&& \mathcal{C}_{3}: R-\frac{\pi i}{\log q} \longrightarrow -\frac{\pi i}{\log q},
\end{alignat*}
where $R < 0$. Because
\begin{equation*}
H \left(x-\frac{\pi i}{\log q} \right) = H \left(x+\frac{\pi i}{\log q} \right), \qquad 
G \left(x-\frac{\pi i}{\log q} \right) = G \left(x+\frac{\pi i}{\log q} \right),
\end{equation*}
we obtain
%\begin{equation*}
%\int_{\mathcal{C}_{3}} \frac{G(s) H(s)}{(1-q^{-s})^{2}} ds
%% = \int_{-\infty}^{0} \frac{H \left(x-\frac{\pi i}{\log q} \right) 
%%G \left(x-\frac{\pi i}{\log q} \right)}{\left(1-q^{-x+\frac{\pi i}{\log q}} \right)^{2}} dx
% = \int_{-\infty}^{0} H \left(x+\frac{\pi i}{\log q} \right) G \left(x+\frac{\pi i}{\log q} \right)(1-q^{-x-\frac{\pi i}{\log q}})^{-2} dx
% = -\int_{\mathcal{C}_{1}} \frac{G(s) H(s)}{(1-q^{-s})^{2}} ds,
%\end{equation*}
\begin{equation}\label{C13}
\int_{\mathcal{C}_{1} \cup \mathcal{C}_{3}} \frac{G(s) H(s)}{(1-q^{-s})^{2}} \frac{ds}{2\pi i} = 0.
\end{equation}
The integral over $\mathcal{C}_{2}$ decays rapidly as $R \to -\infty$. It thus remains to analyse the integral~over $\mathcal{C}_{0}$. Since there is a pole at $s = 0$, we introduce a small half circle of the shape
\begin{equation*}
\mathcal{C}(\delta) \coloneqq \{z \in \C: |z| = \delta, \, \Re(z) > 0 \}.
\end{equation*}
It now follows from~\eqref{res} and~\eqref{C13} that
\begin{multline*}
\int_{\mathcal{C}(\delta)} \frac{G(s) H(s)}{(1-q^{-s})^{2}} \frac{ds}{2\pi i}
 + \int_{-\frac{\pi i}{\log q}}^{-\delta i} \frac{G(s) H(s)}{(1-q^{-s})^{2}} \frac{ds}{2\pi i}
 + \int_{\delta i}^{\frac{\pi i}{\log q}} \frac{G(s) H(s)}{(1-q^{-s})^{2}} \frac{ds}{2\pi i}\\
 = \frac{|A|^{2} \log|A|(1+q^{-1})}{(q \log q)^{2} |1-q^{-1-2it}|^{2}} 
\int_{\Gamma_{0}(1) \backslash \mathbb{H}} E(g|\psi) dg+O(|A|).
\end{multline*}
On the other hand, a straightforward computation gives
\begin{align*}
\int_{\mathcal{C}(\delta)} \frac{G(s) H(s)}{(1-q^{-s})^{2}} \frac{ds}{2\pi i}
 = \frac{|A|^{2} \log|A|(1+q^{-1})}{2(q \log q)^{2} |1-q^{-1-2it}|^{2}} 
\int_{\Gamma_{0}(1) \backslash \mathbb{H}} E(g|\psi) dg+O(|A|).
\end{align*}
It now follows that
\begin{align*}
\int_{\mathcal{C}_{0}} \frac{G(s) H(s)}{(1-q^{-s})^{2}} \frac{ds}{2\pi i}
& = \lim_{\delta \to 0} \left(\int_{-\frac{\pi i}{\log q}}^{-\delta i} \frac{G(s) H(s)}{(1-q^{-s})^{2}} \frac{ds}{2\pi i}
 + \int_{\delta i}^{\frac{\pi i}{\log q}} \frac{G(s) H(s)}{(1-q^{-s})^{2} } \frac{ds}{2\pi i} \right)\\
& = \frac{|A|^{2} \log|A|(1+q^{-1})}{2(q \log q)^{2} |1-q^{-1-2it}|^{2}} 
\int_{\Gamma_{0}(1) \backslash \mathbb{H}} E(g|\psi) dg+O(|A|).
\end{align*}
Putting everything together completes the proof of Lemma~\ref{computation-of-I_2}.
\end{proof}

To conclude our analysis, we combine Lemmata~\ref{computation-of-I_1} and~\ref{computation-of-I_2} and~\eqref{I} to deduce
\begin{equation*}
\mathcal{I} = \frac{(1+q^{-1})}{2\log q} \log|A| \int_{\Gamma_{0}(1) \backslash \mathbb{H}} E(g|\psi) dg+O(1).
\end{equation*}
Theorem~\ref{main} is now a straightforward consequence of the standard approximation argument of Luo--Sarnak~\cite[Page~217]{LuoSarnak1995}. \qed

%\bibliographystyle{amsalpha}
%\bibliography{bib}

\providecommand{\bysame}{\leavevmode\hbox to3em{\hrulefill}\thinspace}
\providecommand{\MR}{\relax\ifhmode\unskip\space\fi MR }
\providecommand{\MRhref}[2]{%
  \href{http://www.ams.org/mathscinet-getitem?mr=#1}{#2}
}
\providecommand{\Zbl}{\relax\ifhmode\unskip\space\thinspace\fi Zbl }
\providecommand{\MR}[2]{%
  \href{https://zbmath.org/?q=an:#1}{#2}
}
\providecommand{\doi}{\relax\ifhmode\unskip\space\thinspace\fi DOI }
\providecommand{\MR}[2]{%
  \href{https://doi.org/#1}{#2}
}
\providecommand{\SSNI}{\relax\ifhmode\unskip\space\thinspace\fi ISSN }
\providecommand{\MR}[2]{%
  \href{#1}{#2}
}
\providecommand{\ISBN}{\relax\ifhmode\unskip\space\thinspace\fi ISBN }
\providecommand{\MR}[2]{%
  \href{#1}{#2}
}
\providecommand{\arXiv}{\relax\ifhmode\unskip\space\thinspace\fi arXiv }
\providecommand{\MR}[2]{%
  \href{#1}{#2}
}
\providecommand{\href}[2]{#2}

\end{document}